\newtheorem*{theorem*}{Theorem}
\newtheorem*{proposition*}{Proposition}
\newtheorem{theorem}{Theorem}
\newtheorem{lemma}{Lemma}
\newtheorem{corollary}{Corollary}
\newtheorem{definition}{Definition}
\theoremstyle{remark}
\newtheorem{remark}{Remark}
\theoremstyle{proof}
\numberwithin{equation}{section}
\newcounter{para}
\newcommand{\p}{\par\refstepcounter{para}\noindent\textbf{\S\thepara}\space}
\numberwithin{para}{section}
\newcommand{\Z}{\mathbb{Z}}
\newcommand{\Q}{\mathbb{Q}}
\newcommand{\C}{\mathbb{C}}
\newcommand{\R}{\mathbb{R}}
\newcommand{\h}{\mathcal{H}}
\newcommand{\of}{\mathcal{O}_F}
\begin{document}
	
	\title{Determination of Hilbert modular forms using squarefree coefficients}
	
	\author{Rishabh Agnihotri}
	\email[Rishabh Agnihotri]{rishabhagnihotri663@gmail.com}
	\address{IISc, Bangalore}
	
	\author{Krishnarjun Krishnamoorthy}
	\email[Krishnarjun Krishnamoorthy]{krishnarjunmaths@outlook.com}
	\address{Beijing Institute of Mathemtical Sciences and Applications (BIMSA), No. 544, Hefangkou Village, Huaibei Town, Huairou District, Beijing.}

	\keywords{Half-integral weight Hilbert modular forms, Determination, Central values of twists, Shimura correspondence}
	\subjclass[2020] {11F41, 11F37, 11F27}
	
	\maketitle
	
	\begin{abstract}
		Let $F$ (over $\Q$) be a totally real number field of narrow class number $1$. We generalize a result of Kohnen on the determination of half-integral weight modular forms by their Fourier coefficients supported on squarefree (algebraic) integers. We also give a soft proof that infinitely many Fourier coefficients supported on squarefree integers are non-vanishing.
	\end{abstract}

	\section{Introduction}\label{Section "Introduction"}
	
	\p	Modular forms have established themselves as central objects of investigation in number theory due to the arithmetic nature of their ``Fourier coefficients''. Understanding the various properties of these Fourier coefficients comprises a significant part of modern day research. While it is clear that the entire collection of Fourier coefficients uniquely determine a modular form, it is interesting to ask for a ``minimal'' set of Fourier coefficients that would uniquely determine a modular form. Finding sharp answers to this question takes a particularly interesting turn in the context of half-integral weight modular forms.

	\p	Half-integral weight modular forms are analogues of the classical modular forms whose weight is a half-integer, prototypical examples being $\theta$-series of various kinds. There are two important developments in this theory that are relevant to this paper. The first one is the Shimura correspondence and the second one is Waldspurger's formula. Both these results provide (in a pleasant complementary fashion) a bridge between the half-integral realm and the realm of integral weight modular forms where more tools and techniques are available.
	
	\p	The Shimura correspondence reduces the study of half-integral weight modular forms to those Fourier coefficients supported on squarefree algebraic integers. The other coefficients are related to those of the Shimura lift in an explicit fashion. A deeper relation between the Fourier coefficients supported at squarefree integers and the Shimura lift is given by the Waldspurger formula. Therefore, philosophically, we expect the squarefree coefficients to carry all the information required to uniquely determine $f$. It is then natural to ask for a rigorous formulation of this expectation. Initial progress in this direction was due to Kohnen \cite{KohnenDetermination} when restricting to the ``plus space" (see \cite{KohnenPlus}, \cite{KohnenPlus2} for definitions).
	
	\begin{theorem}[Kohnen]\label{Theorem "Kohnen"}
		Let $f,g$ be two non-zero Hecke eigenforms in $S_{k+\frac{1}{2}}^+(4)$ with Fourier coefficients $a(n)$ and $b(n)$ respectively and suppose that $a(|D|) = b(|D|)$ for all fundamental discriminants $D$ such that $(-1)^kD > 0$. Furthermore suppose that $\lambda_2 = \mu_2$, where $\lambda_2$ (respectively $\mu_2$) denote the eigenvalues of $f$ (respectively $g$) under the Hecke operator $T_{k+\frac{1}{2}}^+(4)$. Then $f=g$.
	\end{theorem}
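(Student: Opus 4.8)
The plan is to reduce everything to the integral weight setting via the Shimura correspondence, where the determination can be read off from twisted central $L$-values together with the single Hecke eigenvalue at $2$ supplied by hypothesis. First I would invoke the Shimura correspondence in Kohnen's plus space: there is a Hecke-equivariant isomorphism from $S_{k+\frac12}^+(4)$ onto $S_{2k}(\mathrm{SL}_2(\Z))$ carrying the eigenforms $f,g$ to normalized Hecke eigenforms $F,G$, under which the $T_{p^2}$-eigenvalue of a half-integral weight form determines the $T_p$-eigenvalue $\lambda_p(F)$ of its lift for odd $p$, while the $T_{k+\frac12}^+(4)$-eigenvalue corresponds to the datum at $2$. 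Since this correspondence is a bijection on eigenforms (multiplicity one), it suffices to prove $F=G$: then $f$ and $g$ span the same one-dimensional eigenspace, so $f=\alpha g$, and comparing any nonzero fundamental coefficient — such exist because a nonzero form in the plus space cannot have all its squarefree coefficients vanish — forces $\alpha=1$.

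Next I would convert the hypothesis on squarefree coefficients into a statement about $L$-values. By Waldspurger's formula in the Kohnen–Zagier normalization, for every fundamental discriminant $D$ with $(-1)^k D>0$ one has $|a(|D|)|^2 = c_f\,|D|^{k-\frac12}\,L(F,\chi_D,k)$ with $c_f\neq 0$ depending only on $f$ (a ratio of Petersson norms), and likewise for $g$. Hence $a(|D|)=b(|D|)$ yields
\[
c_f\,L(F,\chi_D,k)=c_g\,L(G,\chi_D,k)\qquad\text{for all fundamental }D\text{ with }(-1)^kD>0.
\]

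I would then extract the odd Hecke eigenvalues from this family of central values. Packaging them into the double Dirichlet series $\sum_D L(F,\chi_D,k)\,|D|^{-s}$, together with its variants weighted by $\chi_D(\ell)$ or restricted to congruence classes of $D$ modulo an odd prime $\ell$, produces series with meromorphic continuation whose analytic invariants recover the Euler factors $1-\lambda_\ell(F)\ell^{-s}+\ell^{2k-1-2s}$ at each odd $\ell$. The proportionality above — the overall constant being harmless, as it is fixed only at the very end by the coefficient normalization — then forces $\lambda_\ell(F)=\lambda_\ell(G)$ for every odd prime $\ell$; the crucial analytic input is that these central values are nonzero for infinitely many $D$, so the relevant series are not identically zero. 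This family of quadratic twists is insensitive to the Euler factor at the prime $2$ dividing the level $4$, which is precisely why the extra hypothesis is indispensable: $\lambda_2=\mu_2$ directly supplies $\lambda_2(F)=\lambda_2(G)$. With all Hecke eigenvalues now agreeing, strong multiplicity one gives $F=G$, and hence $f=g$ by the first step.

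The main obstacle is the third step, namely recovering individual odd Hecke eigenvalues from the twisted central values; this is the genuine arithmetic content and rests on the meromorphic continuation of the double Dirichlet series combined with a nonvanishing statement for central values across the family. By contrast, the reduction to integral weight in the first step and the $L$-value dictionary furnished by Waldspurger's formula in the second step are comparatively formal, and the prime $2$ requires no analysis at all once $\lambda_2=\mu_2$ is assumed.
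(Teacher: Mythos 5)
Your proposal is correct, but it takes a genuinely different route from the proof underlying Theorem \ref{Theorem "Kohnen"}. Kohnen's argument --- the one this paper generalizes in Theorem \ref{Theorem "Determination of half-integral weight forms modular forms"} and Corollary \ref{Corollary "Half"} --- uses no Waldspurger-type formula and no nonvanishing theorem. It exploits the identity expressing $L(s,f)$ as $L(2s,\mathsf{f})$ times a Dirichlet series supported on fundamental discriminants (the analogue of \eqref{Equation "Identity for f"}): under the hypothesis the discriminant sums for $f$ and $g$ coincide, so $\Lambda(s,f)/\Lambda(s,g)$ equals a ratio of completed $L$-functions of the Shimura lifts, and the clash between the symmetry $s\mapsto 1-s$ (half-integral weight) and $s\mapsto\tfrac12-s$ (coming from $L(2s,\cdot)$) forces this ratio to be periodic, bounded, and hence constant. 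Your route instead converts the hypothesis, via Kohnen--Zagier, into $c_f\,L(F,\chi_D,k)=c_g\,L(G,\chi_D,k)$ and then runs the Luo--Ramakrishnan argument (Theorem \ref{Theorem "Luo-Ramakrishnan"} of the paper) to get $F=G$, finishing with multiplicity one in the plus space and a sign comparison. This is legitimate, but note the trade-off. On the cost side, your third step is not a formal packaging argument: it is the entire content of Luo--Ramakrishnan's theorem (first moments of twisted central values weighted by $\chi_D(\ell)$, with error terms and nonvanishing in the family), a result both deeper than and historically later than Kohnen's; if you may cite it the proof is complete, otherwise that is where all the work lives. On the benefit side, your route proves strictly more than you claim: the hypothesis $\lambda_2=\mu_2$ is \emph{redundant} in your argument, contrary to your closing assertion that it is ``indispensable''. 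Once the odd eigenvalues $\lambda_\ell(F)=\lambda_\ell(G)$ are recovered, strong multiplicity one already gives $F=G$, since agreement at all but finitely many primes suffices; then $f$ and $g$ lie in the same one-dimensional joint eigenspace of the odd Hecke operators $T_{p^2}$, and the sign comparison gives $f=g$, with the place $2$ never entering. This is precisely why central-value methods remove Kohnen's condition at $2$ (compare Corollary \ref{Corollary "Half"}, which carries no such hypothesis), whereas that condition is genuinely needed in Kohnen's functional-equation proof, where the Euler factor at $2$ cannot be separated from the rest of the data.
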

	
	\p	Kohnen mentions in \cite{KohnenDetermination} that the assumption on the $\lambda_2, \mu_2$ may be removed; he writes ``We expect that the additional condition on $\lambda_2$, in fact, is not necessary, but so far we cannot prove this''.	Modulo some technical subtleties, Kohnen's key idea is the fact that the functional equations of the natural Dirichlet series associated to a half-integral weight form and its Shimura lift have incompatible symmetries, but for the factors arising from the squarefree coefficients. Thus without the contribution of the squarefree coefficients, these incompatibilities may be utilized to our avail. This observation also underlies the proof of our main theorem (Theorem \ref{Theorem "Determination of half-integral weight forms modular forms"} below).	Kohnen treated a somewhat restricted case (of Hecke eigenforms in the plus space corresponding to the full level) but the methods are robust enough to be applied generally.
	
	\p	It is very convenient and quite tempting to restrict attention to newforms residing in the Kohnen plus space primarily because of the existence of a newform theory in this space and Kohnen's result which states that the newforms in the plus space is isomorphic to the space of integral weight newforms under the Shimura correspondence. In this light, the problem of determining half-integral weight forms by their squarefree coefficients is connected to determining integral weight newforms by the central values of the $L$ functions of their twists by quadratic characters, thanks to Waldspurger's formula\footnote{The determination using central values is a much general result as the Waldspurger formula does not take into account the sign of the Fourier coefficient of the half-integral weight form.}. In this direction, we have the following result of Luo-Ramakrishnan \cite{LuoRamakrishnan}.
	
	\begin{theorem}[Luo-Ramakrishnan]\label{Theorem "Luo-Ramakrishnan"}
		Let $f,g$ be normalized newforms in $S_{2k}(N), S_{2m}(N')$ respectively. Suppose that there is a constant $C$ such that
		\[
		L\left(\frac{1}{2},f\otimes \chi_d\right) = C L\left(\frac{1}{2}, g\otimes \chi_d\right)
		\]
		for almost all primitive quadratic characters $\chi_d$ of conductor $d$ prime to $NN'$, then $k=m$, $N=N'$ and $f=Cg$.
	\end{theorem}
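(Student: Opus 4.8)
The plan is to isolate the (analytically normalized) Hecke eigenvalues of $f$ and $g$ from the family of central values by computing a \emph{twisted first moment} over the quadratic characters, and then to invoke multiplicity one. Fix a squarefree integer $\ell$ coprime to $NN'$ together with a smooth, compactly supported weight $\Phi$, and consider
\[
S_f(\ell, X) \;=\; \sum_{d} L\!\left(\tfrac12,\, f\otimes\chi_d\right)\chi_d(\ell)\,\Phi\!\left(\frac{d}{X}\right),
\]
the sum running over fundamental discriminants $d$ prime to $NN'$. The hypothesis $L(1/2,f\otimes\chi_d) = C\,L(1/2,g\otimes\chi_d)$ gives $S_f(\ell,X) = C\,S_g(\ell,X)$ up to the negligibly many excluded $d$, so it would suffice to show that the leading asymptotics of $S_f(\ell,X)$ pin down the weight, the level, and the eigenvalues of $f$.

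First I would open each central value by its approximate functional equation, writing
\[
L\!\left(\tfrac12,\, f\otimes\chi_d\right) \;=\; \sum_{n\ge 1}\frac{\lambda_f(n)\,\chi_d(n)}{\sqrt n}\,V_f\!\left(\frac{n}{|d|}\right) \;+\; (\text{dual term}),
\]
where $\lambda_f(n)$ are the normalized Hecke eigenvalues and $V_f$ is a smooth cutoff whose Mellin transform encodes the $\Gamma$-factors (hence the weight) and the conductor (hence the level). Substituting and interchanging the two summations reduces $S_f(\ell,X)$ to $\sum_n \tfrac{\lambda_f(n)}{\sqrt n}\big(\sum_d \chi_d(\ell n)\,\Phi(d/X)\big)V_f(\cdots)$. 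The inner sum in $d$ is then evaluated by Poisson summation over the discriminants: via quadratic reciprocity $\sum_d \chi_d(\ell n)\Phi(d/X)$ has a main term of genuine size $X$ precisely when $\ell n$ is a perfect square, and is a lower-order oscillatory term otherwise. Collecting the diagonal contributions, where $\ell n$ is a square and hence (as $\ell$ is squarefree) $n = \ell j^2$, produces the main term; using the Hecke relation $\lambda_f(\ell j^2)=\lambda_f(\ell)\lambda_f(j^2)$ for $\gcd(\ell,j)=1$, this factors as $\lambda_f(\ell)/\sqrt{\ell}$ times a universal quantity depending only on $(k,N)$ (a ratio of $\Gamma$-factors, a power of $\log X$, and a residue of an $L(\sym^2 f,\cdot)$-type Euler product).

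Comparing the two sides of $S_f(\ell,X) = C\,S_g(\ell,X)$ term by term in the asymptotic expansion, the matching of the $X$-power and the $\log X$-power would force equality of the archimedean and conductor data, i.e. $k=m$ and $N=N'$; and the matching of the $\ell$-dependent factors would force $\lambda_f(\ell)=\lambda_g(\ell)$ for every squarefree $\ell$ coprime to $NN'$, hence $a_f(p)=a_g(p)$ for almost all primes $p$. Strong multiplicity one then yields $f=g$ as normalized newforms, and tracing the constant back through the main terms gives $C=1$, so $f=Cg$ as claimed. The hard part, and the analytic heart of the argument, will be establishing the asymptotic for the twisted first moment with a power-saving error term: this demands a careful Poisson summation over the quadratic characters with the attendant sign and conductor bookkeeping, control of the dual sum produced by the approximate functional equation, and a uniform (in $\ell$) separation of the genuine diagonal from the off-diagonal terms; the restriction to fundamental discriminants and the resulting $\mu^2$-sieving add the usual technical friction.
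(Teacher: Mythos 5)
This statement is not proved in the paper at all: it appears as background (Theorem~\ref{Theorem "Luo-Ramakrishnan"}), quoted with a citation to \cite{LuoRamakrishnan}, so there is no internal proof to compare against; your proposal can only be measured against the original argument of Luo and Ramakrishnan. In outline you have reproduced that argument: their proof does run through the twisted first moment $\sum_d L(\tfrac12, f\otimes\chi_d)\chi_d(\ell)$, opened by the approximate functional equation, with the character sum over discriminants producing a main term exactly from the diagonal $\ell n=\square$, and with multiplicity one closing the argument. So the skeleton is the right one.

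Two steps, however, are genuinely flawed as written. First, your claim that ``the matching of the $X$-power and the $\log X$-power'' forces $k=m$ and $N=N'$ is false: for quadratic twists of a fixed $GL(2)$ cusp form the first moment is $c_f\,\hat\Phi(0)\,X(1+o(1))$ with \emph{no} logarithm, whatever the weight and level (the $X\log X$ phenomenon you have in mind occurs for $GL(1)$, e.g. $\sum_d L(\tfrac12,\chi_d)$); the archimedean and conductor data enter only through the constant $c_f$, and the relation $c_f\lambda_f(\ell)=C\,c_g\lambda_g(\ell)$ cannot by itself separate the constant from the weight and level. The correct route, which makes that step unnecessary, is: take $\ell=1$ to get $c_f=Cc_g\neq 0$ (here the nonvanishing $L(1,\sym^2 f)\neq 0$ is needed), divide to get $\lambda_f(\ell)=\lambda_g(\ell)$ for all squarefree $\ell$ prime to $NN'$, and then invoke strong multiplicity one for the associated automorphic representations: agreement of the \emph{normalized} Satake parameters at almost all finite places gives $\pi_f\cong\pi_g$, and it is this isomorphism that delivers $k=m$, $N=N'$ and $f=g$ simultaneously. (Your intermediate assertion $a_f(p)=a_g(p)$ presupposes $k=m$; with possibly distinct weights the argument must be run with the $\lambda$'s.) Second, ``up to the negligibly many excluded $d$'' is not free: under a density-one hypothesis each excluded discriminant can contribute as much as the convexity bound $|d|^{1/2+\epsilon}$, so the exceptional set could a priori swamp the main term of size $X$; one needs nonnegativity of the central values (Waldspurger) and/or a second-moment bound for the family (the quadratic large sieve) to bound the excluded contribution by $o(X)$. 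A smaller caveat: the diagonal sum is $\sum_j \lambda_f(\ell j^2)(\ell j^2)^{-1/2}V(\cdots)$, whose $\ell$-dependence is $\lambda_f(\ell)\ell^{-1/2}$ only up to local correction factors at $p\mid\ell$, which are explicit rational functions of $\lambda_f(p)$ and $1/p$ and must be handled before one can conclude exact equality of eigenvalues prime by prime. With these repairs your outline is essentially the original proof.
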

	
	\p	This result has been generalized in many directions (see \cite{GangulyHoffsteinSengupta}, \cite{GMP}, \cite{HamiehTanabe}, \cite{MunshiMathAnnalen}, \cite{MunshiSengupta}, \cite{MunshiSenguptaForum}, \cite{PiJNT}). We remark that Theorem \ref{Theorem "Luo-Ramakrishnan"} allows us to compare forms of varying weights and levels. This feature is not present in Theorem \ref{Theorem "Kohnen"}. Here and henceforth we consider Hilbert modular forms over a totally real number field $F$ of narrow class number $1$. The purpose of this paper is to prove two results regarding the determination of half-integral weight Hilbert modular forms by their Fourier coefficients supported on squarefree integers.
		
	\begin{theorem}\label{Theorem "Determination of half-integral weight forms modular forms"}
		Suppose that $f$ and $g$ are half-integral weight cuspidal Hilbert Hecke eigenforms of varying weights and levels. Denote the normalized Fourier coefficients (see \eqref{Equation "lambda_f definition"}) of $f,g$ as $\lambda_f$ and $\lambda_g$ respectively. Suppose there exists $\kappa\neq 0$ such that
		\begin{equation}\label{Equation "Theorem 1"}
			\lambda_f(\tau) = \kappa \lambda_g(\tau)
		\end{equation}
		for every squarefree $\tau\in \of$. Then $f=\kappa g$.
	\end{theorem}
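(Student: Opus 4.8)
The plan is to transplant Kohnen's functional-equation mechanism, as flagged in the introduction, to the Hilbert modular setting, converting the equality of squarefree coefficients into an identity of Dirichlet series whose incompatible symmetries can be reconciled only when all discrete parameters of $f$ and $g$ coincide. First I would invoke the Shimura correspondence over $F$ (which is clean here since the narrow class number is $1$): attach to the Hecke eigenform $f$ its Shimura lift $\mathrm{Sh}(f)$, an integral weight Hilbert Hecke eigenform, and likewise to $g$. The piece of the correspondence I need is the explicit Shimura relation, which in Dirichlet-series form reads
\[
\sum_{\mathfrak{m}} \frac{\lambda_f(\tau \mathfrak{m}^2)}{N(\mathfrak{m})^{s}} \;=\; \lambda_f(\tau)\,\frac{L(s,\mathrm{Sh}(f))}{L_\tau(s)},
\]
where $\tau\in\of$ is squarefree and $L_\tau$ is the quadratic $L$-factor cut out by $\tau$ (with an appropriate normalizing shift). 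This exhibits the squarefree coefficients, together with the Hecke eigenvalues of the lift, as carrying all the Fourier data of $f$.

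Next I would assemble the completed natural Dirichlet series $\Lambda(s,f)$ of $f$ and the completed $\Lambda(s,\mathrm{Sh}(f))$ of its lift, each with its own functional equation; crucially, the centers of symmetry and the archimedean gamma factors (a product over the real places of $F$) are governed, respectively, by the half-integral weight and by the integral weight of the lift, so the two reflections are genuinely different. The displayed relation realizes $\Lambda(s,f)$ as $\Lambda(s,\mathrm{Sh}(f))$ twisted against the squarefree generating series $D_f(s)=\sum_{\tau\ \mathrm{squarefree}} \lambda_f(\tau)\,N(\tau)^{-s}$, and the only factor that can absorb the discrepancy between the two symmetries is exactly $D_f$. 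The hypothesis \eqref{Equation "Theorem 1"} gives $D_f=\kappa\,D_g$, so $f$ and $g$ feed \emph{the same} squarefree series into the two mismatched functional equations.

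The crux is then a comparison argument. Through $f$, the series $D_f$ inherits a reflection relating $s$ to an expression built from the half-integral center and the lift's center; through $g$ the proportional series $D_g$ inherits another such reflection. Since $D_f=\kappa D_g$ is a single Dirichlet series, it cannot satisfy two distinct functional equations at once, and matching the archimedean factors and the locations of the centers forces the weights and levels of $f$ and $g$ to agree and, beyond that, forces $L(s,\mathrm{Sh}(f))$ and $L(s,\mathrm{Sh}(g))$ to coincide by strong multiplicity one. This last point is precisely the information Kohnen had to assume (the condition $\lambda_2=\mu_2$); here it is forced by the incompatibility of symmetries rather than hypothesized, and I expect this extraction to be the main obstacle, since it requires setting up the completed $L$-functions with the correct product of gamma factors over the real places and proving rigorously that the half-integral and integral reflections cannot be matched unless every discrete parameter agrees.

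With equal weights and levels and a common Shimura lift in hand, the conclusion is formal. By the Shimura relation every coefficient satisfies $\lambda_f(\tau\mathfrak{m}^2)=\lambda_f(\tau)\,c(\mathfrak{m})$ and $\lambda_g(\tau\mathfrak{m}^2)=\lambda_g(\tau)\,c(\mathfrak{m})$ with the \emph{same} multipliers $c(\mathfrak{m})$, determined by the common lift; combined with $\lambda_f(\tau)=\kappa\,\lambda_g(\tau)$ on squarefree $\tau$ this yields $\lambda_f=\kappa\,\lambda_g$ on \emph{all} of $\of$, whence $f=\kappa g$. Equivalently, once the weights and levels match, $h:=f-\kappa g$ is a genuine cusp form whose squarefree coefficients all vanish, so the Shimura relation forces every Fourier coefficient of $h$ to vanish and $h=0$. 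The technical care throughout will be concentrated in the Hilbert modular bookkeeping: the interplay of narrow ideal classes and units in parametrizing squarefree $\tau$, and the plus-space conditions underlying the generalized Shimura correspondence.
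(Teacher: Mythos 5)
Your outline does reproduce the paper's strategy at the level of skeleton: factor $L(s,f)$ through the Shimura lift (in the paper, $L(s,f)=L(2s,\mathsf{f})\sum_{\tau\ \mathrm{sq.free}}\lambda_f(\tau)N(\tau)^{-s}L(2s+\tfrac12,\chi_\tau)^{-1}$ --- note the quadratic $L$-factors and, crucially, the \emph{doubled} argument $2s$, which is the true source of the mismatch of symmetries), use the hypothesis to identify the squarefree data of $f$ and $g$, and play the half-integral functional equation against that of the lift. But the step you defer as ``the main obstacle'' \emph{is} the proof, and the heuristic you offer for it --- that $D_f$ ``cannot satisfy two distinct functional equations at once'' --- is not the mechanism and would not work as stated. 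First, $D_f$ inherits no self-reflection at all: the half-integral functional equation relates $f$ to its Fricke image $f|_{W'(n_f)}$, a \emph{different} form whose squarefree coefficients are not constrained by the hypothesis. Second, two reflections are never ``incompatible''; their composition is a translation, and extracting information from it requires analytic input. The paper's actual mechanism avoids both problems by passing to the ratio $C(s):=\Lambda(s,f)/\Lambda(s,g)$, in which the squarefree series cancels entirely, leaving $C(s)=\bigl(N(n_f)/N(n_g)\bigr)^{s/2}\bigl(\prod_i \Gamma\text{-ratios}\bigr)\,L(2s,\mathsf{f})/L(2s,\mathsf{g})$; then (a) weights are matched by evaluating the combined functional equations at $2s=m_1+\tfrac52$, with $m_1$ the minimal weight entry: if no permutation matches the weights, one side vanishes because $\Gamma\bigl(\tfrac{m_1-m_1}{2}\bigr)=\Gamma(0)$ appears in a denominator, while the other side is non-zero because completed eigenform $L$-functions are non-vanishing in $\Re(s)>1$ and $\Re(s)<0$ --- contradiction; (b) level norms are matched by composing the two functional equations twice (using that $W'$ is an involution) to get the shift relation \eqref{Equation "Periodicity"} relating $C(s)$ and $C(s+1)$ up to exponentials in $N(n_f)/N(n_g)$ and $N(n_\mathsf{f})/N(n_\mathsf{g})$, then letting $s=\sigma\to\infty$ where $L(2\sigma,\mathsf{f}),L(2\sigma,\mathsf{g})\to 1$, which forces those exponentials to be trivial; (c) the resulting exact periodicity $C(s)=C(s+1)$, together with the bound $|C(\sigma+it)|\leqslant 4$ for $\sigma$ large, makes $C$ entire and bounded, hence constant by Liouville, and the normalization pins the constant to $1$. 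None of these steps, nor their required inputs (non-vanishing in half-planes, the $\sigma\to\infty$ asymptotics, Liouville), appears in your sketch.

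Two further points. Strong multiplicity one is indeed needed, but not where you invoke it: its role is to guarantee that $f|_{W'}$ (which the functional equation drags in) is again an eigenform with the \emph{same} Shimura lift $\mathsf{f}$, so that the ratio trick can be iterated; the equality $L(s,\mathsf{f})=L(s,\mathsf{g})$ is then an output of the Liouville step, not something the symmetry mismatch hands you directly. Finally, your closing step --- propagating $\lambda_f(\tau)=\kappa\lambda_g(\tau)$ from squarefree $\tau$ to all of $\of^+$ via M\"obius inversion of the Shimura relation once the lifts coincide --- is correct and in fact more transparent than the paper's terse ending; but your alternative phrasing via $h:=f-\kappa g$ fails as stated, since $h$ need not be a Hecke eigenform and the Shimura relation cannot be applied to it. Use the coefficient-by-coefficient propagation, which only needs the eigenforms $f$ and $g$ separately.
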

	
	\p	Theorem \ref{Theorem "Determination of half-integral weight forms modular forms"} can be seen as an intermediate result between Theorem \ref{Theorem "Kohnen"} and Theorem \ref{Theorem "Luo-Ramakrishnan"} in that, we allow for the flexibility of varying weights and levels, but still require an equality between the Fourier coefficients themselves and not just an equality between their sizes. Furthermore we require \eqref{Equation "Theorem 1"} to hold for \textit{every} squarefree coefficient, and not ``almost all''. It is unclear to us (and rather unlikely) if the present methods may be generalized to remove the above shortcomings.
	
	\p	It is desirable and often quite useful to obtain results which allow us to ``effectively'' determine the modular form. This amounts to a \textit{finite} number of conditions (as opposed to the requirement that \eqref{Equation "Theorem 1"} hold for every squarefree integer). In the case of Theorem \ref{Theorem "Luo-Ramakrishnan"}, this problem was resolved by Munshi \cite{MunshiMathAnnalen}. We present below a soft result in this direction in our setting.
	
	\begin{theorem}\label{Theorem "Non-vanishing of central values"}
		Suppose $f$ is a non-zero Hecke eigenform. Then 
		\[
		\underset{N(\tau) < T,\ \tau\ \mathrm{sq. free.}}{\limsup_{T\to\infty} } |\lambda_f(\tau)| \gg_{f,F} 1.
		\]
	\end{theorem}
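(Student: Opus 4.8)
The plan is to derive the stated bound from a linear lower bound on the second moment of the squarefree coefficients. Put $A(T)=\sum_{N(\tau)<T,\ \tau\,\mathrm{sq.free}}|\lambda_f(\tau)|^2$. Since $F$ has narrow class number one, $\#\{\tau\,\mathrm{sq.free}:N(\tau)<T\}\asymp_F T$, so it suffices to prove $\limsup_{T\to\infty}A(T)/T>0$. Granting this, the average $A(T)/\#\{\tau\,\mathrm{sq.free}:N(\tau)<T\}$ is $\geq c>0$ along a sequence $T_n\to\infty$; were $|\lambda_f(\tau)|^2<c-\delta$ for all sufficiently large $N(\tau)$, the tail of $A(T_n)$ would bound this average by $c-\delta+o(1)$, a contradiction, so $\limsup_{N(\tau)\to\infty}|\lambda_f(\tau)|^2\geq c$ and hence $\limsup|\lambda_f(\tau)|\gg_{f,F}1$.

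For the second moment I would use the Rankin--Selberg convolution. Let $L(s)=\sum_{\tau}|\lambda_f(\tau)|^2N(\tau)^{-s}$, summed over all $\tau$. Unfolding $|f|^2$ against a real--analytic Hilbert Eisenstein series shows that $L(s)$, in the normalization of \eqref{Equation "lambda_f definition"}, continues past $\Re(s)=1$ with a simple pole at the edge $s=1$ of residue proportional to $\|f\|^2>0$; in particular $(s-1)L(s)\to r>0$ as $s\to1^+$. Now separate the square part: since $f$ is a Hecke eigenform, the Shimura correspondence gives, for $\tau=\tau_0\mathfrak{m}^2$ with $\tau_0$ squarefree, a factorization $\lambda_f(\tau_0\mathfrak{m}^2)=\lambda_f(\tau_0)\beta_{\tau_0}(\mathfrak{m})$ in which $\beta_{\tau_0}$ is multiplicative and determined at each prime by the Hecke eigenvalues $\lambda_g$ of the integral weight Shimura lift $g$. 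Hence for $\Re(s)>1$,
\[
L(s)=\sum_{\tau_0\,\mathrm{sq.free}}\frac{|\lambda_f(\tau_0)|^2}{N(\tau_0)^s}\,I_{\tau_0}(s),\qquad I_{\tau_0}(s)=\sum_{\mathfrak{m}}\frac{|\beta_{\tau_0}(\mathfrak{m})|^2}{N(\mathfrak{m})^{2s}}.
\]

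The decisive observation is that $I_{\tau_0}(s)$ is a Dirichlet series in $N(\mathfrak{m})^{-2s}$, so its edge sits at $s=\tfrac12$, strictly to the left of the edge $s=1$ of $L$. Using the Ramanujan bound for $g$---or indeed any bound $|\lambda_g(\mathfrak{p})|\ll N(\mathfrak{p})^{\theta}$ with $\theta<\tfrac12$---each Euler factor of $I_{\tau_0}$ is $1+O(N(\mathfrak{p})^{-2+2\theta})$ with implied constant independent of $\tau_0$ (the finitely many primes dividing $\tau_0$ or the level alter only finitely many, uniformly bounded, factors). As $\sum_{\mathfrak{p}}N(\mathfrak{p})^{-2+2\theta}<\infty$, this gives $I_{\tau_0}(s)\leq C$ uniformly in $\tau_0$ for real $s\geq1$. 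Writing $D(s)=\sum_{\tau_0\,\mathrm{sq.free}}|\lambda_f(\tau_0)|^2N(\tau_0)^{-s}$ and comparing termwise, $L(s)\leq C\,D(s)$ for real $s>1$, so $\liminf_{s\to1^+}(s-1)D(s)\geq r/C>0$: the squarefree Dirichlet series inherits the pole. A soft one--sided Tauberian step then finishes the reduction: for real $s>1$ one has $(s-1)D(s)=s(s-1)\int_1^\infty A(x)x^{-s-1}\,dx$, and with $x=e^t$, $s=1+\varepsilon$ this equals $(1+\varepsilon)\int_0^\infty \big(A(e^t)e^{-t}\big)\,\varepsilon e^{-\varepsilon t}\,dt$, an average of $A(x)/x$ against a probability measure whose mass escapes to infinity as $\varepsilon\to0$; hence $\limsup_{x\to\infty}A(x)/x\geq\liminf_{s\to1^+}(s-1)D(s)>0$, exactly the bound needed above (the estimate $A(x)\ll x$ justifying the integration by parts follows from convergence of $D$ for $\Re(s)>1$).

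I expect the main obstacle to be the analytic input of the second step: establishing the Rankin--Selberg pole with positive residue for \emph{half--integral weight} Hilbert modular forms, and making precise the Shimura relation over $F$ that produces the Euler product $I_{\tau_0}$ together with its uniform bound. It is essential to route through the integral weight lift $g$, since the half--integral coefficients $\lambda_f(\tau)$ are not known to satisfy Ramanujan whereas Deligne's bound is available for $\lambda_g$; and it is precisely the factor--of--two gap between the edges of $L$ and of the square--part series $I_{\tau_0}$ that forces the squarefree coefficients to carry the pole. The remaining Tauberian and averaging arguments are entirely soft.
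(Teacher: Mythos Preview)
Your argument is correct and rests on the same three ingredients as the paper: the Rankin--Selberg pole for $\sum|\lambda_f(\xi)|^2N(\xi)^{-s}$, the Shimura relation expressing $\lambda_f(\tau_0\mathfrak{m}^2)$ in terms of $\lambda_f(\tau_0)$ and the eigenvalues of the integral-weight lift, and Deligne's bound for the lift to control the square part uniformly in $\tau_0$.

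The execution, however, is organized differently. You pass to Dirichlet series, show that the squarefree series $D(s)$ inherits the pole because the uniformly bounded factors $I_{\tau_0}(s)$ have their edge at $s=\tfrac12$, and then invoke a one-sided Tauberian step to descend to $\limsup A(T)/T>0$. The paper works entirely with truncated sums: it starts from the Rankin--Selberg estimate $\sum_{N(\xi)\leq T}|\lambda_f(\xi)|^2\sim cT$, inserts the Shimura relation pointwise, bounds the inner divisor sum by $N(\xi)^\epsilon$ via Ramanujan, and after summing the square part obtains
\[
T \ll_{f,\epsilon} T\cdot\sup_{\log T\leq N(\tau)\leq T}|\lambda_f(\tau)|^2 + O\!\left(T^{1/2+\epsilon}\log^A T\right),
\]
which gives the conclusion directly upon dividing by $T$. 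The paper's route is slightly more elementary---no Tauberian argument and no need to discuss Euler products for $I_{\tau_0}$---while your formulation makes the structural reason (the factor-of-two gap between the edges of $L$ and of the square-part series) more transparent and isolates the intermediate statement $\limsup A(T)/T>0$, which is of some independent interest.
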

	
	\p	Theorem \ref{Theorem "Non-vanishing of central values"} is slightly different from the $\Omega$ results obtained in \cite{GunKohnenOmega} and \cite{DasOmega} in that we restrict ourselves to squarefree integers. It is an interesting problem to show that the lower bound above is in fact $\infty$; that is to say that $\limsup\{|\lambda_f(\tau)|\} = \infty$. This would provide further credence to the generalization of Ramanujan conjecture as stated in \eqref{Equation "Ramanujan bound"} below. A recent result in this direction is available in \cite{GunKohnenSound}.
	
	\p	Theorem \ref{Theorem "Non-vanishing of central values"} is essentially a consequence of the Rankin-Selberg theory. The proof of Theorem \ref{Theorem "Non-vanishing of central values"} can be fine-tuned to give an upper bound on the least squarefree $\tau$ such that $|\lambda_f(\tau)|\neq 0$. This would amount to keeping track of the error term in the Rankin-Selberg estimate \eqref{Equation "Rankin-Selberg estimate"}.
	
	\p	The assumption of $F$ having narrow class number $1$ is made partly out of necessity and partly for convenience. Many parts of the proof can be modified with the weaker requirement that the discriminant of $F$ be a principal ideal, along with some other minor assumptions. We may further weaken our assumptions if we restrict to parallel weight forms. We feel that this setup best demonstrates the robustness of the new ideas incorporated into the proof of Theorem \ref{Theorem "Determination of half-integral weight forms modular forms"}.
	
	\subsection*{Acknowledgements}
	
	The authors would like to thank Winfried Kohnen for his encouragement and comments. K. K. was supported by Beijing Natural Science Foundation International Scientists Program (Grant No. IS24007). K. K. would also like to thank the Indian Insitute of Science for its generous hospitality.
			
	\section{Preliminaries}\label{Section "Preliminaries"}
	
	\p	\textbf{Notation:}	We let $F$ be a totally real extension of $\Q$ of degree $r$ and of narrow class number $1$. The various embeddings of $F$ into $\R$ will be labeled $\{\sigma_i\}_{i=1}^{r}$. For any $\sigma_i$ and $\xi\in F$, it will also be convenient to denote $\sigma_i(\xi)$ as $\xi_i$. Let $F^+$ denote the totally positive elements of $F$ and for any subset $S$ of $F$, the superscript $+$ will be used to denote the totally positive elements in $S$. That is $S^+ := S\cap F^+$. For any $\xi\in F$, the membership in $F^+$ will be denoted by $\xi\gg 0$. We shall denote the ring of integers of $F$ as $\of$ and its units as $U$. The different ideal of $F$ will be denoted by $\mathfrak{d}$ generated by $\delta\in \of$ and the discriminant of $F$ over $\Q$ will be denoted as $D_F$. Since we have assumed that $F$ has trivial narrow class group, every fractional ideal of $F$ has a totally positive generator and the totally positive units are squares; that is $U^+= \{u^2\ |\ u\in U\}$.
	
	\p	We adopt the following multi-index notation. We shall embed $F\hookrightarrow \R^r\hookrightarrow \C^r$ via its various embeddings $\{\sigma_i\}$. We shall also identify $\C\hookrightarrow \C^r$ via the diagonal embedding. Addition and multiplication in $\C^r$ is point-wise. Elements of $\C^r$ will generally be denoted by boldface letters, for example $\textbf{x},\ \textbf{y},\ \textbf{z}$ etc.	For $\textbf{w} = (w_1, \ldots, w_r),\textbf{z} = (z_1,\ldots,z_r)\in \C^r$ we let $\textbf{w}\cdot \textbf{z}\in \C$ be the sum $\sum_i w_iz_i$. For $\textbf{z} = (z_1,\ldots,z_r)\in \C^r$ and $\textbf{t} = (t_1,\ldots,t_r)\in (\C)^r$, we define $\textbf{z}^\textbf{t} := z_1^{t_1}\ldots z_r^{t_r}$. In the text, we shall take care that the exponentiation is unambiguous.
	
	\p	Even though we have assumed the narrow class number of $F$ to be equal to $1$, in some instances it is convenient to consider integral ideals of $\of$ rather than the algebraic integers themselves. We shall freely move between one setting and the other as our requirements may be. We warn the reader that in many cases, while working with integers in $\of$, we implicitly work modulo the unit group. We hope that this does not lead to any confusion.
	
	\p	\textbf{Hilbert modular forms:}	Standard references for the results that follow include \cite{GarrettBook}, \cite{ShimuraDuke78}, \cite{ShimuraDuke87}, \cite{ShimuraDuke93}. A weight $\textbf{k}$ is an element of $(\frac{1}{2}\Z)^r\subset \C^r$ of the form 
	\begin{equation}\label{Equation "weight definition"}
		\textbf{k} = \textbf{m}_{\textbf{k}} + \delta_\textbf{k}\left(\frac{1}{2}, \ldots, \frac{1}{2}\right),
	\end{equation}
	where $\textbf{m}_\textbf{k}\in (\Z)^r$ and $\delta_\textbf{k}\in \{0,1\}$. The theta series associated to $F$ is defined as
	\begin{equation}\label{Equation "Theta definition"}
		\theta_F(\textbf{z}) := \sum_{v\in \of} e^{\pi i  v^2\cdot \textbf{z}}.
	\end{equation}
	This is a holomorphic function on the generalized upper half plane $\h^r$, and is modular with respect to the group
	\begin{equation}
		\textbf{D}:= \left\{\gamma := \begin{pmatrix}
			a_\gamma&b_\gamma\\c_\gamma&d_\gamma
		\end{pmatrix} \ \Bigg| a_\gamma, d_\gamma\in \of, b_\gamma\in 2\mathfrak{d}^{-1}, c_\gamma\in 2\mathfrak{d}, \det(\gamma)=1 \right\}.
	\end{equation}	
	For $\gamma\in \textbf{D}$ and $\textbf{z}\in \h^r$, define
	\begin{equation}\label{Equation "Definition multiplier system"}
		h(\gamma, \textbf{z}):= \frac{\theta_F\left(\gamma\cdot \textbf{z}\right)}{\theta_F(\textbf{z})}.
	\end{equation}
	Furthermore, for $\gamma\in SL_2(F)$  we define $j\left(\gamma, z\right) = c_\gamma z+d_\gamma$ and let $j(\gamma, \textbf{z}) = \left(j(\sigma_i(\gamma), z_i)\right)\in \C^r$. A \textit{Hilbert modular cusp form} of weight \textbf{k} for a congruence subgroup $\Gamma$ of $SL_2(\of)$ is a holomorphic function $f : \h^r\to \C$ such that 
	\begin{enumerate}
		\item	$f(\gamma \textbf{z}) = h(\gamma, \textbf{z})^{\delta_\textbf{k}} j(\gamma, \textbf{z})^{\textbf{m}_\textbf{k}} f(\textbf{z})$ for every $\textbf{z}\in \h^r$ and $\gamma\in \Gamma$,
		\item 	$f$ vanishes at all the cusps of $\Gamma$.
	\end{enumerate}
	
	\p	We understand to allow $\delta_\textbf{k}=1$ only when $\Gamma \subseteq \textbf{D}$. Let $\Lambda$ be the collection of all $d\in F$ such that $\left(\begin{smallmatrix}1&d\\&1\end{smallmatrix}\right)\in \Gamma$. Then $f$ has a ``Fourier expansion" given by
	\[
	f(\textbf{z}) = \sum_{\xi \in \Lambda^\vee} a_f(\xi) e^{2\pi i \xi\cdot\textbf{z}}
	\]
	where $\Lambda^\vee$ denotes the dual of $\Lambda$ given by $\{\alpha\in F\ |\ \alpha \Lambda \subseteq \of\}$. The vanishing of $f$ at the cusps of $\Gamma$ implies that the Fourier expansion is supported only on totally positive elements $\xi$. We shall restrict our choices further so that the Fourier expansion is of the form 
	\begin{equation}\label{Equation "Fourier expansion"}
		f(\textbf{z}) = \sum_{\xi \in \of^+} a_f(\xi) e^{(2-\delta_\textbf{k})\pi i \xi\cdot\textbf{z}}.
	\end{equation}
	This corresponds to conventional choices of congruence subgroups in the integral and half-integral case. 
	
	\p	We may embed the unit subgroup $U$ inside $SL_2(\of)$ as $u\mapsto \left(\begin{smallmatrix}
		u&\\&u^{-1}
	\end{smallmatrix}\right)$ and, by abuse of notation, denote the image as $U$. For a congruence subgroup $\Gamma$, we let $U_\Gamma := U\cap \Gamma$, which is in particular a finite index subgroup of $U$. In the remainder of this paper, we shall choose congruence subgroups $\Gamma$ such that $U_\Gamma = U$. In particular, we define
	\begin{equation}\label{Equation "Gamma_0(n) definition"}
		\Gamma_0(n) := 	\left\{\gamma := \begin{pmatrix}
			a_\gamma&b_\gamma\\c_\gamma&d_\gamma
		\end{pmatrix} \ \Bigg| a_\gamma, d_\gamma\in \of, b_\gamma\in 2\mathfrak{d}^{-1}, c_\gamma\in 2^{-1}n\mathfrak{d}, \det(\gamma)=1 \right\}.
	\end{equation}
	We shall say that $f$ is of level $n$ if it is a modular form for the group $\Gamma_0(n)$; we shall implicitly require that $4|n$.
	
	\p	There is a commuting family of Hecke operators (denoted by $T_\mathfrak{n}$) indexed by the integral ideals $\mathfrak{n}$ of $\of$, which act on the space of modular forms that we are interested in. A cusp form $f$ of weight $\textbf{k}$ will be called a Hecke eigenform if it is an eigenvector for $T_{\mathfrak{n}^{1+\delta_k}}$ for all integral ideals $\mathfrak{n}$. We shall also suppose that Hecke eigenforms $f$ are normalized so that $T_{\of}f = f$ and denote the $\mathfrak{p}$-th eigenvalue as $\omega_\mathfrak{p}$, that is $T_{\mathfrak{p}^{1 + \delta_k}} f = \omega_\mathfrak{p} f$.
	
	\p\label{Subsection "L function well defined"}\textbf{The Dirichlet series associated to $f$:}	Suppose that $f$ is a modular form of weight $\textbf{k}$ with respect to some congruence subgroup $\Gamma$. Furthermore suppose that $f$ is a Hecke eigenform for all the Hecke operators with system of eigenvalues given by $\{\omega_\mathfrak{n}\}$. Then for any $u\in U_\Gamma$,
	\begin{equation}\label{Equation "Well-defined"}
		a_f(\xi u^2) = u^{\textbf{m}} a_f(\xi).
	\end{equation}	
	In particular, we may define the $L$-function associated to $f$ as
	\begin{equation}\label{Equation "L(s,f) definition"}
		L(s,f) := \sum_{\xi\in \of^+/U_\Gamma^+} \frac{\lambda_f(\xi)}{N(\xi)^s}
	\end{equation}
	where
	\begin{equation}\label{Equation "lambda_f definition"}
		\lambda_f(\xi) := a_f(\xi) \xi^{-\frac{\textbf{k}-1}{2}}.
	\end{equation}
	It is easy to see that this is well-defined. As in the classical case (see \cite[Section 1.7]{GarrettBook}), it can be shown that Fourier coefficients $a_f(\xi)$ satisfy the ``Hecke bound''
	\begin{equation}\label{Equation "Hecke bound"}
		a_f(\xi) \xi^{-\frac{\textbf{k}-1}{2}} \ll_\epsilon N(\xi)^{\frac{1}{2}+\epsilon}
	\end{equation}
	so that $L(s,f)$ is absolutely convergent for $\Re(s) > \frac{3}{2}$. It seems reasonable to expect (see \cite{GunKohnenSound}) that the Ramanujan conjecture in this case would be the bound 
	\begin{equation}\label{Equation "Ramanujan bound"}
		a_f(\xi) \xi^{-\frac{\textbf{k}-1}{2}} \ll_\epsilon N(\xi)^{\epsilon}.
	\end{equation}	
	We remark here that if $U_\Gamma = U$, for an integral ideal $\mathfrak{n}$ with a totally positive generator $n$, the definition $\lambda_f(\mathfrak{n}) := \lambda_f(n)$ is well-defined. In this fashion, we may rewrite $L(s,f)$ as a sum indexed by the integral ideals of $\of$.
	
	\p\textbf{The Shimura lift:}	For a half-integral weight Hilbert modular form $f$ of weight $\textbf{k}$ as above, we may associate an integral weight Hilbert modular form $\mathsf{f}$ of weight $2\textbf{m}$. More precisely, given a totally positive squarefree element $\tau$, let $\chi_\tau$ denote the Hecke character associated to the quadratic extension $F(\sqrt{\tau})/F$. Then the Shimura correspondence asserts that there exists a modular form of weight $2\textbf{m}$ (say $\mathsf{f}_\tau$) whose Fourier coefficients are described formally as
	\begin{multline}\label{Equation "Identity"}
		\underset{\xi\neq 0}{\sum_{\xi\in \of/U_\Gamma}} c(\xi, \mathsf{f}_\tau) M(\xi) = \left(\underset{\xi\neq 0}{\sum_{\xi\in \of/U_\Gamma}} a_f(\tau \xi^2) \xi^{-\textbf{m}} M(\xi)\right) \left(\underset{\xi\neq 0}{\sum_{\xi\in \of/U_\Gamma}} \frac{\chi_\tau(\xi) M(\xi)}{N(\xi)}\right)\\
		= a_f(\tau) \prod_{\mathfrak{p}} \left(1 - \omega_\mathfrak{p} M(\mathfrak{p}) + \frac{M(\mathfrak{p}^2)}{N(\mathfrak{p})}\right)^{-1}
	\end{multline}
	where $M(\xi) = M(\xi\of)$ is a formal symbol on integral ideals such that $M(\xi_1\xi_2) = M(\xi_1)M(\xi_2)$ and the product over $\mathfrak{p}$ is over all the prime ideals of $\of$. The second equality is valid only if $f$ is a Hecke eigenform (in which case $\mathsf{f}_\tau$ is also a Hecke eigenform). In this case $\mathsf{f}_\tau \not\equiv 0$ if and only if $a_f(\tau) \neq 0$. Moreover, whenever $a_f(\tau)\neq 0$, the modular form $\mathsf{f}_\tau a_f(\tau)^{-1}$ is independent of the choice of $\tau$. We shall denote this form as $\mathsf{f}$ and call it \textit{the} Shimura lift of $f$. Henceforth, we shall normalize Hecke eigenforms so that their first Fourier coefficient, that is $\lambda_{\mathsf{f}}(\of)$, equals unity.
	
	\p	\textbf{Rankin-Selberg theory:}	The Rankin-Selberg $L$-function of two half-integral weight forms $f,g$ (modular with respect to congruence subgroups $\Gamma_1, \Gamma_2$ respectively) is defined as 
	\begin{equation}\label{Equation "Rankin-Selberg definition"}
		L(s, f\times g) = \sum_{\xi \in \of^+/ U_\Gamma^+} \frac{\lambda_f(\xi)\overline{\lambda_g(\xi)}}{N(\xi)^s}
	\end{equation}
	where $\Gamma = \Gamma_1\cap \Gamma_2$. The series is absolutely convergent for $\Re(s) \gg 1$ and continues to a meromorphic function with a possible pole at $s=1$ \cite{ShimuraDuke87}. In particular, if $f=g$, then $L(s, f\times f)$ has a simple pole at $s=1$ with a positive residue. From a standard Mellin inversion argument, we can deduce the Rankin-Selberg estimate; as $T\to\infty$, we have
	\begin{equation}\label{Equation "Rankin-Selberg estimate"}
		\underset{N(\xi)\leqslant T}{\sum_{\xi \in \of^+/ U_\Gamma^+}} \left|\lambda_f(\xi)\right|^2 \sim \mathrm{Res}_{s=1} L(s,f\times f)T.
	\end{equation}
	
	\p	\textbf{Ray class groups:}	We briefly recall the notion of a ray class group. This section is quite general and so we may consider the case when $F$ is an arbitrary number field. A \textit{modulus} $\mathfrak{M}$ of $F$ is a (possibly empty) collection $\mathfrak{m}$ of real embeddings of $F$ along with an integral ideal $\mathfrak{b}$. Symbolically we write $\mathfrak{M} = \mathfrak{m}\mathfrak{b}$. A modulus $\mathfrak{M}_1 = \mathfrak{m}_1\mathfrak{b}_1$ is said to divide another modulus $\mathfrak{M}_2 = \mathfrak{m}_2\mathfrak{b}_2$ if $\mathfrak{m}_1\subseteq \mathfrak{m}_2$ and $\mathfrak{b}_1 | \mathfrak{b}_2$. 
	
	\p	Suppose $\mathfrak{M} = \mathfrak{m}\mathfrak{b}$ is a given modulus. Let $I(\mathfrak{b})$ denote the group of all fractional ideals of $F$ that are coprime to $\mathfrak{b}$. Consider the subgroup $P(\mathfrak{M})$ of all principal ideals $\{(\xi)\ |\ \sigma(\xi) > 0\ \forall \sigma\in \mathfrak{m}\ \&\ v_\mathfrak{p}(\xi-1)\geqslant v_\mathfrak{p}(\mathfrak{b}) \forall \mathfrak{p}|\mathfrak{b}\}$. Here $\mathfrak{p}$ stands for prime ideals and $v_\mathfrak{p}$ denotes the valuation at $\mathfrak{p}$. The quotient
	\begin{equation}\label{Equation "Ray class group definition"}
		Cl(\mathfrak{M}) := I(\mathfrak{b})/P(\mathfrak{M})
	\end{equation}
	is a finite group called the ray class group associated to the modulus $\mathfrak{M}$. When $\mathfrak{M} = \phi \of$, then the ray class group is the usual class group of $F$, which we denote by $Cl_F$. We remark that there is a natural map from $Cl(\mathfrak{M})\to Cl_F$ for any modulus $\mathfrak{M}$.
	\begin{lemma}\label{Lemma "Existence of modulus"}
		Given a finite collection of integral ideals $\{\mathfrak{a}_1,\ldots, \mathfrak{a}_n\}$ of $F$ and a set of real embeddings of $\mathfrak{m}$ of $F$, there exists an integral ideal $\mathfrak{b}$ coprime to all of $\mathfrak{a}_i$'s such that $\{[\mathfrak{a}_i]\}_{i=1}^{n}$ represent distinct classes in the ray class group modulo $\mathfrak{m}\mathfrak{b}$.
	\end{lemma}
	
	\begin{proof}
		Without loss of generality, we may suppose that $\mathfrak{m}=\phi$, since there is a natural map $Cl(\mathfrak{m}\mathfrak{b})\to Cl(\phi\mathfrak{b})$ for any integral ideal $\mathfrak{b}$. Therefore it suffices to prove the lemma for moduli of the form $\phi\mathfrak{b}$. For ease of notation, we denote the modulus $\phi\mathfrak{b}$ as simply $\mathfrak{b}$.
		
		 Let $S = \{\mathfrak{a}_1,\ldots, \mathfrak{a}_n\}$ be the set of given integral ideals. Define an equivalence relation on $S$ as $\mathfrak{a}_i\sim \mathfrak{a}_j$ if $[\mathfrak{a}_i] = [\mathfrak{a}_j]$ inside $Cl_F$. Let $S = \bigcup\limits_{j=1}^{k} S_j$ be a decomposition of $S$ into the corresponding equivalence classes. From class field theory, for any integral ideal $\mathfrak{b}$ we have the following exact sequence,
		\begin{equation}\label{Equation "Exact sequence"}
			1\to \of^\times \cap F^{\mathfrak{b},1} \to \of^\times \to F^{\mathfrak{b}}/F^{\mathfrak{b},1} \to Cl(\mathfrak{b}) \to Cl_F\to 1,
		\end{equation}
		where
		\[
		F^\mathfrak{b} = \{\xi\in F^\times\ |\ (\xi)\in I(\mathfrak{b})\}
		\]
		and
		\[
		F^{\mathfrak{b},1} = \{\xi\in F^\mathfrak{b}\ |\ (\xi)\in P(\mathfrak{b})\}.
		\]
		In particular there is a natural map from $Cl(\mathfrak{b})\to Cl_F$, and therefore, it suffices to show that $\mathfrak{b}$ can be chosen so that ideals inside each $S_i$ do not belong to the same class.
		
		For each $\mathfrak{a}_i, \mathfrak{a}_j\in S_k$, with $i\neq j$, we choose an element $\xi_{i,j,k}\in \of\setminus \of^\times$ such that either $\mathfrak{a}_i = \xi_{i,j,k}\mathfrak{a}_j$ or $\mathfrak{a}_j = \xi_{i,j,k}\mathfrak{a}_j$. Such elements exists by construction. Choose $\mathfrak{b}$ to be an integral ideal not containing the elements $\{ \xi_{i,j,k}-1\}_{i,j,k}$.	From the exactness of \eqref{Equation "Exact sequence"} at $F^{\mathfrak{b}}/ F^{\mathfrak{b},1}$, we see that $[(\xi_{i,j,k})]$'s represent nontrivial classes in $Cl(\mathfrak{b})$. The proof follows from here.
	\end{proof}
	
	\begin{lemma}\label{Lemma "Uniqueness of L functions"}
		Suppose $\lambda(\mathfrak{a})$ is an arithmetic function on the set of all integral ideals of $F$, so that the associated Dirichlet series	is absolutely convergent in some right half plane. Suppose that the twisted Dirichlet series
		\[
		L(s, \lambda\otimes \chi) := \sum_{\mathfrak{n}\subseteq \of} \frac{\lambda(\mathfrak{n}) \chi(\mathfrak{n})}{\mathcal{N}(\mathfrak{n})^s}
		\]
		vanishes identically for almost every Hecke character $\chi$. Then $\lambda\equiv 0$.
	\end{lemma}
	
	\begin{proof}[Proof of Lemma \ref{Lemma "Uniqueness of L functions"}]
		Let $\mathfrak{a}$ be a given integral ideal. Suppose $\{\mathfrak{a}=:\mathfrak{a}_1,\ldots,\mathfrak{a}_n\}$ be the complete collection of integral ideals whose norms equal that of $\mathfrak{a}_1$. From Lemma \ref{Lemma "Existence of modulus"} choose $\mathfrak{b}$ such that $\{[\mathfrak{a}_i]\}$ represent distinct classes in $Cl(\mathfrak{b})$. From assumption, the Dirichlet series
		\[
		\frac{1}{\left|Cl(\mathfrak{b})\right|} \sum_{\chi} \overline{\chi(\mathfrak{a}_1)} L(s, \lambda\otimes \chi) \equiv 0
		\]
		where the sum runs over all characters modulo $\mathfrak{b}$, viewed as Hecke characters. From the orthogonality of characters, the left hand side above equals
		\[
		\underset{[\mathfrak{n}] = [\mathfrak{a}_1]}{\sum_{\mathfrak{n}\subseteq \of}} \frac{\lambda(\mathfrak{n})}{\mathcal{N}(\mathfrak{n})^s}.
		\]
		From the choice of $\mathfrak{b}$ and  \cite[Exercise 9.1.12]{RamMurtyBook}, we may conclude that $\lambda(\mathfrak{a})=0$. This completes the proof as the choice was $\mathfrak{a}$ was arbitrary.
	\end{proof}
	
	\begin{remark}
		The above proposition for $F=\Q$ is quite trivial. We do not even the equality to hold for every twist. In fact we may directly employ \cite[Exercise 9.1.12]{RamMurtyBook}. Such an argument fails over number fields because we will have multiple integral ideals with the same norm. A simple minded Mellin inversion then becomes insufficient to conclude the above result.
	\end{remark}
	
	\section{Functional equation}\label{Section "Functional Equation"}	
	
	\p	\textbf{The Fricke involution:}	Recall the different ideal $\mathfrak{d}$ of $F$ is generated by $\delta$. Following \cite{SahaJNT}, we introduce the Fricke involution in our setting.	Since,
	\begin{equation}\label{Equation "Theta functional equation"}
		\theta_F\left(\textbf{z}\right) = (-\textbf{iz})^{-\frac{1}{2}} D_F^{-\frac{1}{2}} \theta_F\left(-\frac{1}{\delta^2 \textbf{z}}\right),
	\end{equation}
	we can extend the definition of $h(\gamma,\textbf{z})$ as
	\begin{equation}
		h\left(\begin{pmatrix}
			0&& - \delta^{-1}\\ \delta&&0
		\end{pmatrix}, \textbf{z}\right) := (-\textbf{iz})^{\frac{1}{2}} D_F^{\frac{1}{2}}.
	\end{equation}
	
	\p	Suppose $n\in \of^+$. Define the $W(n)$ operator as
	\begin{equation}
		W(n) := \left\{\begin{pmatrix}
			0 & -2\delta^{-1}\\ 2^{-1}\delta n & 0
		\end{pmatrix}, h(W(n), \textbf{z})\right\} =  \left\{\begin{pmatrix}
		0 & -2\delta^{-1}\\ 2^{-1}\delta n & 0
		\end{pmatrix}, (-\textbf{iz})^{\frac{1}{2}} \frac{D_F^{\frac{1}{2}}N(n)^\frac{1}{4}}{2^{\frac{r}{2}}}\right\}.
	\end{equation}
	Let $\chi_n$ denote the Hecke character associated to the quadratic extension $F(\sqrt{n})$ over $F$ as above. If $n$ is a perfect square in $F$, then we shall take $\chi_n$ to be trivial. 
	\begin{lemma}
		If $n$ is a totally positive integer and $f\in S_\textbf{k}(\Gamma_0(n))$, then $f|_{W(n)}\in S_\textbf{k}(\Gamma_0(n),\chi_n^{\delta_\textbf{k}})$.
	\end{lemma}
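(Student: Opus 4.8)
The plan is to verify the two defining properties of a form in $S_\textbf{k}(\Gamma_0(n),\chi_n^{\delta_\textbf{k}})$ for $g:=f|_{W(n)}$, where I write the action of the pair $W(n)=\{w,h(W(n),\textbf{z})\}$ in the usual way as
\begin{equation*}
	g(\textbf{z}) = J_{W(n)}(\textbf{z})^{-1}\, f(w\textbf{z}), \qquad J_{W(n)}(\textbf{z}) := h(W(n),\textbf{z})^{\delta_\textbf{k}}\, j(w,\textbf{z})^{\textbf{m}_\textbf{k}},
\end{equation*}
with $w$ the matrix part of $W(n)$. Holomorphy of $g$ on $\h^r$ is immediate, since $w$ acts by a biholomorphism of $\h^r$ and $f$ is holomorphic; cuspidality follows because $W(n)$ permutes the cusps of $\Gamma_0(n)$ and the slash operator transports the vanishing condition along. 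The substance is therefore the (twisted) transformation law under $\Gamma_0(n)$, which I would handle through the cocycle formalism for the full automorphy factor $J(\gamma,\textbf{z}) := h(\gamma,\textbf{z})^{\delta_\textbf{k}} j(\gamma,\textbf{z})^{\textbf{m}_\textbf{k}}$. Using the composition law $g|\gamma' = f|(W(n)\gamma')$, the problem reduces to comparing the two factorizations of the element whose matrix part is $w\gamma'$.

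First I would carry out the group-theoretic step. With $w = \left(\begin{smallmatrix} 0 & -2\delta^{-1}\\ 2^{-1}\delta n & 0\end{smallmatrix}\right)$ one has $\det w = n$ and $w^2=-nI$, and for $\gamma=\left(\begin{smallmatrix} a & b\\ c& d\end{smallmatrix}\right)\in\Gamma_0(n)$ a direct computation gives
\begin{equation*}
	w\gamma w^{-1} = \begin{pmatrix} d & -4\delta^{-2} n^{-1} c \\ -4^{-1}\delta^2 n b & a\end{pmatrix}.
\end{equation*}
Using $c\in 2^{-1}n\mathfrak{d}$, $b\in 2\mathfrak{d}^{-1}$ and $\mathfrak{d}=(\delta)$, the upper-right entry lies in $2\mathfrak{d}^{-1}$, the lower-left entry lies in $2^{-1}n\mathfrak{d}$, the diagonal entries lie in $\of$, and the determinant is $1$; hence $w\gamma w^{-1}\in\Gamma_0(n)$, so conjugation by $w$ is an automorphism of $\Gamma_0(n)$. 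Writing $\gamma'':=w\gamma'w^{-1}\in\Gamma_0(n)$ we have $w\gamma'=\gamma''w$, and since $f$ is modular of weight $\textbf{k}$ for $\Gamma_0(n)$ the relation $f(\gamma''(w\textbf{z})) = J(\gamma'',w\textbf{z}) f(w\textbf{z})$ reduces the transformation law for $g$ to the single multiplier identity
\begin{equation*}
	J(\gamma'',w\textbf{z})\, J_{W(n)}(\textbf{z}) = \chi_n(\gamma')^{\delta_\textbf{k}}\, J(\gamma',\textbf{z})\, J_{W(n)}(\gamma'\textbf{z}).
\end{equation*}

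The integral-weight factor $j^{\textbf{m}_\textbf{k}}$ satisfies the genuine $GL_2^+(F)$ cocycle relation, and since $\gamma''W(n)$ and $W(n)\gamma'$ share the matrix part $w\gamma'$, its contributions to the two sides coincide exactly and may be discarded; the identity thus collapses to a statement about the theta multiplier $h^{\delta_\textbf{k}}$ alone. For $\delta_\textbf{k}=0$ nothing remains, recovering the ordinary Fricke statement $f|_{W(n)}\in S_\textbf{k}(\Gamma_0(n))$. The case $\delta_\textbf{k}=1$ is where I expect the real difficulty to lie. By definition $h(\gamma,\textbf{z})=\theta_F(\gamma\textbf{z})/\theta_F(\textbf{z})$ obeys the cocycle relation $h(\gamma_1\gamma_2,\textbf{z})=h(\gamma_1,\gamma_2\textbf{z})h(\gamma_2,\textbf{z})$ on $\textbf{D}$, but this cocycle fails to extend across $W(n)$ by precisely a root-of-unity/Gauss-sum factor. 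I would compute this discrepancy from the functional equation \eqref{Equation "Theta functional equation"} of $\theta_F$ together with the explicit shape of $h(\gamma'',\cdot)$ relative to $h(\gamma',\cdot)$ under the conjugation $\gamma''=w\gamma'w^{-1}$; the quadratic symbols emitted by the two theta transformations should recombine, via quadratic reciprocity in $\of$ (here the narrow class number one hypothesis, the existence of totally positive generators, and the fact that totally positive units are squares all enter), into the value $\chi_n(\gamma')$ of the Hecke character of $F(\sqrt{n})/F$. Identifying this Gauss-sum discrepancy with $\chi_n(\gamma')$ is the crux of the argument; the remaining manipulations, including the bookkeeping of the normalizing constants $D_F^{1/2}N(n)^{1/4}2^{-r/2}$ appearing in $h(W(n),\textbf{z})$, are routine.
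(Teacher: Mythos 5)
Your proposal follows the paper's proof essentially step for step: the same reduction of the problem to a single multiplier identity via $f|_{W}|_{\gamma}$, the identical conjugation computation $w\gamma w^{-1} = \left(\begin{smallmatrix} d & -4\delta^{-2}n^{-1}c\\ -4^{-1}\delta^2 nb & a\end{smallmatrix}\right) \in \Gamma_0(n)$ (your entry-by-entry ideal check is correct), and the same cancellation of the integral-weight factors $j(\cdot,\cdot)^{\textbf{m}_\textbf{k}}$ by the cocycle relation, which disposes of the case $\delta_\textbf{k}=0$.

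The divergence is at the point you yourself flag as the crux: the compatibility of the theta multiplier $h$ with the Fricke matrix when $\delta_\textbf{k}=1$. The paper does not prove this from scratch either --- it invokes the identity (4.0.1) of \cite{SahaJNT}, which is precisely the statement that the failure of the $h$-cocycle across $W(n)$ produces the character value $\chi_n^{\delta_\textbf{k}}(a_\gamma)$. You instead propose to re-derive this from the functional equation \eqref{Equation "Theta functional equation"} of $\theta_F$ together with quadratic reciprocity in $\of$, but you only assert that the Gauss-sum factors ``should recombine'' into $\chi_n(\gamma')$; nothing in your write-up actually establishes this, and it is the entire content of the lemma in the half-integral case (everything else is formal). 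So as a self-contained argument your attempt has a genuine gap exactly there: to close it you would either have to carry out the reciprocity computation --- a nontrivial calculation with Gauss sums/Hilbert symbols over $\of$, and the place where the arithmetic of $F$ genuinely enters --- or do what the paper does and quote Saha's identity.
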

	
	\begin{proof}
		For ease of notation, let us write $W$ for $W(n)$. It is enough to verify the transformation property of $f|_W$, that is
		\[
		\left(f|_{W}\right)|_\gamma = \chi_n^{\delta_\textbf{k}}(a_\gamma) f|_{W}
		\]
		for every $\gamma= \left(\begin{smallmatrix}
			a_\gamma&b_\gamma\\c_\gamma&d_\gamma
		\end{smallmatrix}\right) \in\Gamma_0(n)$. For such a $\gamma$, we have
		\[
		W\gamma W^{-1} = \begin{pmatrix}d_\gamma&-2^2\delta^{-2}n^{-1}c_\gamma\\ -2^{-2}\delta^2 nb_\gamma&a_\gamma\end{pmatrix}=: \gamma'\in \Gamma_0(n).
		\]
		Then it follows that
		\[
		f|_W|_\gamma (\textbf{z}) = h(\gamma, \textbf{z})^{-\delta_\textbf{k}} j(\gamma, \textbf{z})^{-\textbf{m}} h(W, \gamma \textbf{z})^{-\delta_{\textbf{k}}} j(W, \gamma\textbf{z})^{-\textbf{m}} h(\gamma', W\textbf{z})^{\delta_\textbf{k}} j(\gamma', W\textbf{z})^{\textbf{m}} h(W,  \textbf{z})^{\delta_\textbf{k}} j(W, \textbf{z})^{\textbf{m}} f|_W(\textbf{z}).
		\]
		Thus, it suffices to show that 
		\[
		h(\gamma, \textbf{z})^{-\delta_\textbf{k}} j(\gamma, \textbf{z})^{-\textbf{m}} h(W, \gamma \textbf{z})^{-\delta_\textbf{k}} j(W, \gamma\textbf{z})^{-\textbf{m}} h(\gamma', W\textbf{z})^{\delta_\textbf{k}} j(\gamma', W\textbf{z})^{\textbf{m}} h(W,  \textbf{z})^{\delta_\textbf{k}} j(W, \textbf{z})^{\textbf{m}} = \chi_n^{\delta_\textbf{k}}(a_\gamma)
		\]
		Since the $j$ function satisfies the cocycle condition, we have
		\[
		j(\gamma', W\textbf{z})^{\textbf{m}}  j(W, \textbf{z})^{\textbf{m}}=j(\gamma, \textbf{z})^{\textbf{m}} j(W, \gamma\textbf{z})^{\textbf{m}}.
		\]
		This completes the proof when $\delta_\textbf{k}=0$. If $\delta_\textbf{k}=1$, we follow \cite[(4.0.1)]{SahaJNT}.
	\end{proof}
	
	\p	Following the above strategy, we may show that the $W(n)$ operator commutes with all the Hecke operators away from $n$ (see \cite[Lemma 4.1]{SahaJNT}). In particular, if $f$ is a Hecke eigenform, then so is $f|_{W(n)}$. Furthermore, using the strong multiplicity one theorem for $GL(2)$, we can conclude that the Shimura lifts of $f$ and $f|_{W(n)}$ coincide.
	\begin{lemma}\label{Lemma "Involution"}
		If we define
		\[
		f|_{W'(n)} := \textbf{i}^{\textbf{m}}n^{\frac{\textbf{m}}{2}} f|_{W(n)},
		\]
		then, $	f|_{W'(n)}|_{W'(n)} = f$.
	\end{lemma}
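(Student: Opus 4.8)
The plan is to view $W(n)$ as the pair $\left\{A,\, h(W(n),\textbf{z})\right\}$ with underlying matrix $A = \left(\begin{smallmatrix} 0 & -2\delta^{-1}\\ 2^{-1}\delta n & 0\end{smallmatrix}\right)$, and to compute the composite $f|_{W(n)}|_{W(n)}$ by tracking the accumulated automorphy factors. The first step is the one-line computation $A^2 = -nI$; since a scalar matrix acts trivially as a M\"obius transformation, $A^2\textbf{z} = \textbf{z}$ and hence $f(A^2\textbf{z}) = f(\textbf{z})$. Writing out the weight-$\textbf{k}$ slash twice and regrouping gives
\begin{equation*}
\left(f|_{W(n)}|_{W(n)}\right)(\textbf{z}) = \left[h(W(n),\textbf{z})\, h(W(n),A\textbf{z})\right]^{-\delta_\textbf{k}} \left[j(W(n),\textbf{z})\, j(W(n),A\textbf{z})\right]^{-\textbf{m}} f(\textbf{z}),
\end{equation*}
so the task reduces to evaluating the two bracketed multipliers.

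The $j$-factor carries no branch ambiguity, since $\textbf{m}\in\Z^r$ makes integer powers single valued. The cocycle relation for $j$ gives $j(W(n),\textbf{z})\, j(W(n),A\textbf{z}) = j(A^2,\textbf{z}) = -n$ (componentwise $-n_i$), so that this bracket contributes $(-n)^{-\textbf{m}} = (-1)^{\textbf{m}} n^{-\textbf{m}}$, where $(-1)^{\textbf{m}} := \prod_i (-1)^{m_i}$. One may also confirm $j(W(n),\textbf{z})\, j(W(n),A\textbf{z}) = -n$ by direct substitution of $(A\textbf{z})_i = -4/(\delta_i^2 n_i z_i)$.

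The essential point, and the step I expect to require genuine care, is showing that the half-integral multiplier $h(W(n),\textbf{z})\, h(W(n),A\textbf{z})$ equals $1$ on the nose; this is a branch computation for the square roots. Writing $h(W(n),\textbf{z}) = \prod_i (-iz_i)^{1/2}\cdot C$ with $C = D_F^{1/2} N(n)^{1/4} 2^{-r/2}$, I would first check that for $z_i\in\h$ both $-iz_i$ and $-i(A\textbf{z})_i = 4i/(\delta_i^2 n_i z_i)$ lie in the open right half-plane: their real parts are $\mathrm{Im}(z_i)$ and $4\,\mathrm{Im}(z_i)/(\delta_i^2 n_i |z_i|^2)$ respectively, both positive. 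Consequently the two principal square roots multiply without a stray sign, and since the product of the radicands collapses to the positive real number $4/(\delta_i^2 n_i)$, one gets $(-iz_i)^{1/2}(-i(A\textbf{z})_i)^{1/2} = 2/(|\delta_i| n_i^{1/2})$. Taking the product over $i$ and using $\prod_i |\delta_i| = |N(\delta)| = D_F$ (the norm of the different $\mathfrak{d} = (\delta)$ being the discriminant) yields $2^r/(D_F N(n)^{1/2})$, which exactly cancels $C^2 = D_F N(n)^{1/2} 2^{-r}$. Hence the bracket equals $1$, and combining with the $j$-factor gives $f|_{W(n)}|_{W(n)} = (-1)^{\textbf{m}} n^{-\textbf{m}} f$.

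It remains to fold in the normalization. Since the scalar $\textbf{i}^{\textbf{m}} n^{\textbf{m}/2}$ commutes with the slash, and $\textbf{i}^{2\textbf{m}} = (-1)^{\textbf{m}}$,
\begin{equation*}
f|_{W'(n)}|_{W'(n)} = \left(\textbf{i}^{\textbf{m}} n^{\textbf{m}/2}\right)^2 f|_{W(n)}|_{W(n)} = \textbf{i}^{2\textbf{m}} n^{\textbf{m}} (-1)^{\textbf{m}} n^{-\textbf{m}} f = \left((-1)^{\textbf{m}}\right)^2 f = f.
\end{equation*}
This makes transparent the choice of the normalizing constant: the factor $n^{\textbf{m}/2}$ absorbs the $n^{-\textbf{m}}$ coming from the $j$-cocycle, while $\textbf{i}^{\textbf{m}}$ absorbs the sign $(-1)^{\textbf{m}}$. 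The case $\delta_\textbf{k} = 0$ is subsumed automatically, as the $h$-bracket is then raised to the zeroth power and the delicate square-root analysis is not even needed.
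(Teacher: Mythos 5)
Your proof is correct and is precisely the ``direct computation'' that the paper's one-line proof alludes to: the identity $A^2=-nI$, the $j$-cocycle giving the factor $(-1)^{\textbf{m}}n^{-\textbf{m}}$, and the verification that the half-integral multipliers $h(W(n),\textbf{z})\,h(W(n),A\textbf{z})$ cancel to $1$ (the branch-cut check being valid since both radicands lie in the right half-plane, and $\prod_i|\delta_i| = D_F$). Your computation is also consistent with the transformation formula for $f(-4/(\delta^2 n\,\textbf{iw}))$ used in the paper's proof of the functional equation, so nothing further is needed.
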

	
	\begin{proof}
		Follows by direct computation.
	\end{proof}
	
	\begin{theorem}\label{Theorem "Functional Equation"}
		Suppose $f$ is a half-integral weight cusp form of weight $\textbf{k}$ and level $n$. Then,
		\begin{enumerate}
			\item	$L(s,f)$ is absolutely convergent for $\Re(s) > 1$,
			\item 	continues analytically to the whole complex plane and satisfies the functional equation
			\[
			\Lambda(s, f) = \Lambda(1-s, f|_{W'(n)})
			\]
		\end{enumerate}
		where
		\[
		\Lambda(s,f) := D_F^s N(n)^{\frac{s}{2}} (2\pi)^{-rs} \prod_{j=1}^{r} \Gamma\left(s + \frac{k_j-1}{2}\right)L(s,f).
		\]
	\end{theorem}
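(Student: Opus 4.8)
The plan is to prove part (1) by a second-moment argument, and parts of (2) by Hecke's method: integrating $f$ along the imaginary locus $\textbf{z} = i\textbf{y}$ against a Mellin kernel over a fundamental domain for the unit action, then splitting the integral and exploiting the Fricke involution $W'(n)$ constructed above to obtain both the continuation and the symmetry.

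For part (1), I would improve on the pointwise bound \eqref{Equation "Hecke bound"} (which only gives convergence for $\Re(s) > \tfrac{3}{2}$) by invoking the Rankin--Selberg asymptotic \eqref{Equation "Rankin-Selberg estimate"}. Since $\sum_{N(\xi) \le T} |\lambda_f(\xi)|^2 \ll_f T$ and the number of $\xi \in \of^+/U^+$ with $N(\xi) \le T$ is $\ll_F T$, the Cauchy--Schwarz inequality applied over dyadic ranges $T/2 < N(\xi) \le T$ yields $\sum_{N(\xi) \le T}|\lambda_f(\xi)| \ll_{f,F} T$. Partial summation then shows that $L(s,f) = \sum_\xi \lambda_f(\xi) N(\xi)^{-s}$ converges absolutely for $\Re(s) > 1$, proving (1).

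For (2) I would first record an integral representation of $\Lambda(s,f)$. Using \eqref{Equation "Fourier expansion"} and $a_f(\xi) = \lambda_f(\xi)\,\xi^{\frac{\textbf{k}-1}{2}}$, I would form $I(s) := \int_{\mathcal D} f(i\textbf{y})\, \textbf{y}^{\,s + \frac{\textbf{k}-1}{2}}\, d^\times\textbf{y}$, where $\mathcal{D}$ is a fundamental domain for the totally positive units $U^+$ acting on $(\R^+)^r$ and $d^\times\textbf{y} = \prod_j dy_j/y_j$. The key point is that the integrand is $U^+$-invariant: applying $\mathrm{diag}(u,u^{-1}) \in \Gamma_0(n)$ together with $\theta_F(u^2\textbf{z}) = \theta_F(\textbf{z})$ gives $f(iu^2\textbf{y}) = u^{-\textbf{m}}f(i\textbf{y})$, and a short exponent count (using $k_j = m_j + \tfrac{\delta_\textbf{k}}{2}$, $N(u^2)=1$, and the parity of $\sum_j m_j$ forced by \eqref{Equation "Well-defined"} via $-1 \in U$) shows the residual unit factor equals $1$. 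Unfolding the $U^+$-orbit sum against $\mathcal D$ replaces $\mathcal{D}$ by all of $(\R^+)^r$, and the coordinatewise identity $\int_0^\infty e^{-a y} y^{t}\,d^\times y = a^{-t}\Gamma(t)$ then produces $\prod_{j}\Gamma\!\left(s + \tfrac{k_j-1}{2}\right)$, the archimedean factor, and $L(s,f)$; absorbing the constants (including the $(2-\delta_\textbf{k})$ arising from the exponent in \eqref{Equation "Fourier expansion"}) into $D_F^s N(n)^{s/2}(2\pi)^{-rs}$ identifies $I(s)$ with $\Lambda(s,f)$ up to an $s$-independent scalar.

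Finally I would split $I(s) = \int_{\mathcal D,\, N(\textbf{y}) \ge c_0} + \int_{\mathcal D,\, N(\textbf{y}) < c_0}$, with $c_0$ the norm fixed by the involution $\textbf{y} \mapsto \tfrac{4}{\delta^2 n\textbf{y}}$ that $W(n)$ induces on the imaginary locus. As $f$ is a cusp form it decays exponentially as $N(\textbf{y}) \to \infty$ within $\mathcal D$, so the first piece is entire in $s$. In the second piece I would substitute $f = (f|_{W'(n)})|_{W'(n)}$ via Lemma \ref{Lemma "Involution"}, expressing $f(i\textbf{y})$ through $(f|_{W'(n)})(i\textbf{y}^*)$ with $\textbf{y}^* = \tfrac{4}{\delta^2 n\textbf{y}}$; the automorphy factor from $h(W(n),\cdot)^{-\delta_\textbf{k}}j(W(n),\cdot)^{-\textbf{m}}$ contributes $\prod_j y_j^{-k_j}$ (times explicit constants), so changing variables $y_j \mapsto y_j^*$ turns the exponent $s + \tfrac{k_j-1}{2}$ into $(1-s) + \tfrac{k_j-1}{2}$ and the region into $N(\textbf{y}^*) \ge c_0$. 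Since $f|_{W'(n)}$ is a cusp form (it lies, up to scalar, in $S_\textbf{k}(\Gamma_0(n),\chi_n^{\delta_\textbf{k}})$ by the earlier lemma), this piece is the entire integral representation of $\Lambda(1-s, f|_{W'(n)})$. Summing the two entire pieces gives the analytic continuation to $\C$ and the functional equation $\Lambda(s, f) = \Lambda(1-s, f|_{W'(n)})$.

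The main obstacle is the bookkeeping around the unit group and the Fricke constants: verifying rigorously that the Mellin integrand is $U^+$-invariant so that the fundamental-domain integral is well defined and unfolds cleanly, and tracking the exact powers of $\delta$, $n$, and $2$ coming from $h(W(n),\textbf{z})$ and $j(W(n),\textbf{z})$ so that the scalars assemble precisely into $D_F^s N(n)^{s/2}(2\pi)^{-rs}$ on both sides. Once the constant term of the cusp form is known to vanish, the exponential decay that renders both truncated integrals entire is routine.
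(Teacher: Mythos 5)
Your proposal is correct and takes essentially the same route as the paper: part (1) via Cauchy--Schwarz against the Rankin--Selberg estimate \eqref{Equation "Rankin-Selberg estimate"}, and part (2) via Hecke's method, unfolding the Mellin integral of $f(\textbf{iy})$ over $(\R^+)^r/U_\Gamma^+$ and splitting it at the locus fixed by the Fricke substitution $\textbf{y}\mapsto 4/(\delta^2 n\textbf{y})$, with the lower piece becoming the entire integral representing $\Lambda(1-s, f|_{W'(n)})$. The only cosmetic difference is that you pass to the lower piece via Lemma \ref{Lemma "Involution"} (writing $f=(f|_{W'(n)})|_{W'(n)}$), whereas the paper applies the $W(n)$-transformation of $f$ directly and only afterwards rewrites the outcome in terms of $W'(n)$.
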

	
	\begin{proof}
		Let us denote the abscissa of convergence of $L(s,f)$ as $\mathfrak{a}_f$.  From Cauchy-Schwartz inequality and \eqref{Equation "Rankin-Selberg estimate"}, 
		\begin{equation}\label{Equation "Abscissa inequality"}
			\underset{T\leqslant N(\xi)\leqslant 2T}{\sum_{\xi \in \of^+/ U_\Gamma^+}} \frac{\left|\lambda_f(\xi)\right|}{N(\xi)^\sigma} \ll_f T^{1-\sigma}
		\end{equation}
		In particular, if $\sigma > 1$, then
		\[
		\underset{N(\xi)\leqslant 2^n}{\sum_{\xi \in \of^+/ U_\Gamma^+}} \frac{\left|\lambda_f(\xi)\right|}{N(\xi)^\sigma} \ll_f \frac{1}{1 - 2^{1-\sigma}}
		\]
		Thus we may choose $\sigma = 1+\epsilon$ for any positive $\epsilon$ and hence conclude that $L(s,f)$ is convergent for $\Re(s) > 1$. Thus $\mathfrak{a}_f \leqslant 1$.
		
		We next prove the analytic continuation and functional equation. Set $T := (\R^+)^{r}$ and we may identify the totally positive elements $F^+\hookrightarrow T$ via its embeddings. From the unit theorem, the image of $U^+\hookrightarrow T$ is a degree $r-1$ lattice. We start with the integral
		\begin{equation}\label{Equation "I(s) definition"}
			I(s) := \int\limits_{T/U_\Gamma^+} f(\textbf{iy}) \textbf{y}^{s+\frac{\textbf{k}-1}{2}} \frac{d\textbf{y}}{\textbf{y}}.
		\end{equation}
		This is well defined from the arguments in \S \ref{Subsection "L function well defined"}. Furthermore the exponential decay of $f$ at the cusps ensures that the integral is convergent (and defines an analytic function) for $\Re(s)\gg 1$. Because of absolute convergence, for $\Re(s)$ large enough, we have
		\begin{align}\nonumber
			I(s) &= \int\limits_{T/U_\Gamma^+} \sum_{\xi \in \of^+} a_f(\xi) e^{-\pi \xi\cdot \textbf{y}} \textbf{y}^{s+\frac{\textbf{k}-1}{2}} \frac{d\textbf{y}}{\textbf{y}}\\ \nonumber
			&= \int\limits_{T} \sum_{\xi \in \of^+/ U_\Gamma^+} a_f(\xi) e^{-\pi \xi\cdot \textbf{y}} \textbf{y}^{s + \frac{\textbf{k}-1}{2}} \frac{d\textbf{y}}{\textbf{y}}\\\label{Equation "Functional equation LHS"}
			&= (\pi)^{-rs - \frac{\textbf{k}-1}{2}} \prod_{j=1}^{r} \Gamma\left(s + \frac{k_j-1}{2}\right) L(s, f).
		\end{align}	
		We break the integration into two parts. Suppose $\textbf{y} = (y_1,\ldots, y_r)$. We have
		\[
		I(s) = \int\limits_{\prod_i y_i \geqslant \beta} + \int\limits_{\prod_i y_i < \beta} =: I_1(s) + I_2(s).
		\]
		where $\beta := \frac{2^r}{D_F \sqrt{N(n)}} > 0$. The first integral is convergent for all $s\in \C$.	So we focus on the second integral. Changing the variable $\textbf{w} = (w_1,\ldots,w_r)$ such that $4 = \delta_i^2 n_i y_iw_i $, we have
		\[
		I_2(s) = \int\limits_{\prod_i w_i \geqslant \beta}  \left(\frac{4}{\delta^2 n\textbf{w}}\right)^{s + \frac{\textbf{k}-1}{2}} f\left(\frac{-4}{\delta^2 n \textbf{iw}}\right) \frac{d\textbf{w}}{\textbf{w}}.
		\]
		We observe that
		\[
		f\left(\frac{-4}{\delta^2 n \textbf{iw}}\right) = \left(\frac{D_F^2N(n)}{4^r}\right)^{\frac{1}{4}} \left(\frac{\delta n \textbf{iw}}{2}\right)^{\textbf{m}} \textbf{w}^{\frac{1}{2}} f|_W(\textbf{iw}).
		\]
		Therefore the previous integral becomes 
		\begin{align*}
			I_2(s) &= \left(\frac{D_F^2N(n)}{4^r}\right)^{\frac{1}{4}} \int\limits_{\prod_i w_i \geqslant \beta} f|_W(\textbf{iw}) \textbf{w}^{\frac{1}{2}}\left(\frac{\delta n \textbf{iw}}{2}\right)^{\textbf{m}} \left(\frac{4}{\delta^2 n\textbf{w}}\right)^{s + \frac{\textbf{k}-1}{2}} \frac{d\textbf{w}}{\textbf{w}}\\
			&= \textbf{i}^{\textbf{m}} \left(\frac{2}{\delta \sqrt{n}}\right)^{2s-1} n^{\frac{\textbf{m}}{2}} \int\limits_{\prod_i w_i \geqslant \beta} f|_W(\textbf{iw}) \textbf{w}^{\frac{\textbf{k}+1}{2} - s} \frac{d\textbf{w}}{\textbf{w}}.
		\end{align*}
		Observing that $f|_W$ is a cusp form, we may conclude that $I_2(s)$ is holomorphic on the whole complex plane. This makes $I(s)$ an analytic function on the whole complex plane. In order to prove the functional equation, we rewrite things in terms of the $W'$ operator;
		\[
		I_2(s) = \left(\frac{2}{\delta \sqrt{n}}\right)^{2s-1} \int\limits_{\prod_i w_i \geqslant \beta} f|_{W'}(\textbf{iw}) \textbf{w}^{\frac{\textbf{k}+1}{2} - s} \frac{d\textbf{w}}{\textbf{w}}.
		\]
		To conclude we have
		\[
		I(s) = I_1(s) + I_2(s) = \int\limits_{\prod_i y_i \geqslant \beta} f(\textbf{iy}) \textbf{y}^{s+\frac{\textbf{k}-1}{2}} \frac{d\textbf{y}}{\textbf{y}} + \left(\frac{2}{\delta \sqrt{n}}\right)^{2s-1} \int\limits_{\prod_i y_i \geqslant \beta} f|_{W'}(\textbf{iy}) \textbf{y}^{\frac{\textbf{k}+1}{2} - s} \frac{d\textbf{y}}{\textbf{y}}.
		\]
		If we define
		\begin{equation}\label{Equation "Lambda definition"}
				\Lambda(s,f) := 2^{\frac{\textbf{k}-1}{2}} \frac{D_F^s N(n)^{\frac{s}{2}}}{2^{rs}} I(s),
		\end{equation}
		then 
		\begin{equation}\label{Equation "Half-integral functional equation"}
			\Lambda(s,f) = \frac{D_F^s N(n)^{\frac{s}{2}} }{(2\pi)^{rs}} \prod_{j=1}^{r} \Gamma\left(s + \frac{k_j-1}{2}\right) L(s, f) = \Lambda(1-s,f|_{W'}).
		\end{equation}
		This completes the proof.
	\end{proof}
	
	\section{Proof of Theorem \ref{Theorem "Determination of half-integral weight forms modular forms"}}\label{Section "Determination - Half integral"}	
	
	\p	\textbf{An equality of Dirichlet series:}	Let $f,g$ be given as in the theorem. Denote the levels of $f,g$ as $4n_f, 4n_g$ and weights as $\textbf{k} = \textbf{m}+\frac{\textbf{1}}{\textbf{2}}, \textbf{l} = \textbf{n} + \frac{\textbf{1}}{\textbf{2}}$ respcetively. Our strategy is to first show that there exists a permutation $\rho$ of $\{1,\ldots,r\}$ such that $l_{\rho(i)} = k_i$ for every $i\in \{1,\ldots,r\}$, and to show that $N(n_f) = N(n_g)$. This will allow us to prove that $\Lambda(s,f)=\Lambda(s,g)$.
	
	\p	Without loss of generality, we may suppose that $\kappa = 1$ (where $\kappa$ is as in \eqref{Equation "Theorem 1"}). Let $\mathsf{f}$ be the Shimura lift of $f$. Recall that $\mathsf{f}$ is a primitive new form of weight $2\textbf{m}$ and level $n_\mathsf{f}$. The Dirichlet series attached to $\mathsf{f}$ is as in \eqref{Equation "L(s,f) definition"} with the weight $2\textbf{m}$ in place of $\textbf{k}$;
	\[
	L(s, \mathsf{f}) = \sum_{\xi \in \of^+/ U_\Gamma^+} \frac{a_\mathsf{f}(\xi) \xi^{-\textbf{m} + \frac{1}{2}}}{N(\xi)^s}.
	\]
	Since the Ramanujan conjecture is known for $\mathsf{f}$, $L(s, \mathsf{f})$ is absolutely convergent for $\Re(s) > 1$. Proceeding as in the proof of Theorem \ref{Theorem "Functional Equation"}, we can show that 
	\begin{equation}\label{Equation "Functional equation Integral"}
		\Lambda(s, \mathsf{f}) = \frac{D_F^s N(n_\mathsf{f})^{\frac{s}{2}} }{(2\pi)^{rs}} \prod_{j=1}^{r} \Gamma\left(s + m_j - \frac{1}{2}\right) L(s, \mathsf{f}) = (-1)^{\textbf{m}} \Lambda(1-s, \mathsf{f}|_{W'}),
	\end{equation}
	Observe that, since $\mathsf{f}$ is a Hecke eigenform, $\Lambda(s,\mathsf{f})$ is holomorphic and non-vanishing in the region $\Re(s) > 1$ and in the region $\Re(s) < 0$ because of the functional equation. Given an integral ideal $\mathfrak{n}$, there exists a totally positive element $\eta$ such that $\mathfrak{n} = \eta\of$. Let us denote $\lambda_\mathsf{f}(\mathfrak{n}) := a_\mathsf{f}(\eta)\eta^{-\textbf{m}+\frac{1}{2}}$.
	
	\p	Suppose that $\rho$ is a totally positive generator for a prime ideal $\mathfrak{p}$. The connection between $\omega_\mathfrak{p}$ and the Fourier coefficient $\lambda_{\mathsf{f}}(\rho)$ can be deduced from \cite[(2.22), (2.24)]{ShimuraDuke78} as
	\[
	\lambda_{\mathsf{f}}(\rho) N(\rho)^{\frac{1}{2}} = \omega_{\mathfrak{p}}.
	\]
	
	\p	If we choose $M(\xi)$ to be $N(\xi)^{\frac{1}{2}-2s}$ for some complex parameter $\Re(s)\gg 1$ and observe that $\omega_\mathfrak{p}$ is the Hecke eigenvalue of $\mathsf{f}$ with respect to $T_\mathfrak{p}$, \eqref{Equation "Identity"} gives
	\[
	\left(\sum_{0\neq \xi\in \of^+/U^+} \frac{a_f(\tau\xi^2) \xi^{-\textbf{m}+\frac{1}{2}}}{N(\xi)^{2s}}\right) \left(\sum_{0\neq\xi\in \of^+/U^+} \frac{\chi_\tau(\xi)}{N(\xi)^{\frac{1}{2}+2s}}\right)
	= a_f(\tau) \prod_{\mathfrak{p}} \left(1 - \frac{\lambda_\mathsf{f}(\mathfrak{p})}{N(\mathfrak{p})^{2s}} + \frac{1}{N(\mathfrak{p})^{4s}}\right)^{-1}
	\]
	which is infact an identity of Dirichlet series in the above region. A renormalization gives
	\[
	\lambda_f(\tau) L(2s, \mathsf{f}) = L\left(2s+\frac{1}{2}, \chi_\tau\right) \left(\sum_{0\neq \xi\in \of^+/U^+} \frac{\lambda_f(\tau \xi^2)}{N(\xi)^{2s}} \right),
	\]
	where $\lambda_f(\tau)$ is given as in \eqref{Equation "lambda_f definition"}. For $\Re(s) > 1$, $L(s, \chi_\tau)$ is given by an absolutely convergent Euler product and is non-vanishing in that region. Therefore
	\begin{equation}\label{Equation "Identity L function 2"}
		\frac{\lambda_f(\tau)}{N(\tau)^s}  L\left(2s+\frac{1}{2}, \chi_\tau\right)^{-1} L(2s, \mathsf{f}) =  \sum_{0\neq\xi\in \of^+/U^+} \frac{\lambda_f(\tau \xi^2)}{N(\tau\xi^2)^{s}} .
	\end{equation}
	
	\p	Summing over all the squarefree $\tau$ (modulo $U^+$) we get
	\begin{equation}\label{Equation "Identity for f"}
		L\left(s, f\right) = L(2s, \mathsf{f}) \left(\underset{\tau\ \mathrm{sq.\  free}}{\sum_{\tau\in \of^+/U^+}} \frac{\lambda_f(\tau)}{N(\tau)^{s}}   L\left(2s+\frac{1}{2}, \chi_\tau\right)^{-1} \right).
	\end{equation}
	Proceeding similarly for $g$, (whose Shimura lift we denote by $\mathsf{g}$) we arrive at
	\begin{equation}\label{Equation "Identity for g"}
		L\left(s, g\right) = L(2s, \mathsf{g}) \left(\underset{\tau\ \mathrm{sq.\  free}}{\sum_{\tau\in \of^+/U^+}} \frac{\lambda_g(\tau)}{N(\tau)^{s}}   L\left(2s+\frac{1}{2}, \chi_\tau\right)^{-1} \right).
	\end{equation}
	
	\p	Define
	\begin{equation}\label{Equation "C(s) definition"}
		C(s) := \frac{\Lambda(s,f)}{\Lambda(s,g)}.
	\end{equation}
	where $\Lambda(s,f)$ is given in \eqref{Equation "Lambda definition"}. We shall show that $C(s)$ is a constant. Suppose that $\lambda_f(\tau) = \lambda_g(\tau)$ for every squarefree $\tau$.	Denote the weight of $g$ as $\textbf{l} = \textbf{n} + \left(\frac{1}{2}, \ldots, \frac{1}{2}\right)$. We shall first show that there exists a permutation $\rho$ of the indices $\{1,2,\ldots,r\}$ such that $l_i = k_{\rho(i)}$ for every $1\leqslant i\leqslant r$. Our proof is via contradiction. From \eqref{Equation "Identity for f"} and \eqref{Equation "Identity for g"}, we have
	\begin{equation}
		C(s) =\left(\frac{N(n_f)}{N(n_g)}\right)^{\frac{s}{2}} \left(\prod_{i=1}^{r}\frac{\Gamma\left(\frac{2s + m_i-\frac{1}{2}}{2}\right)}{\Gamma\left(\frac{2s + n_i-\frac{1}{2}}{2}\right)}\right) \frac{L(2s, \mathsf{f})}{L(2s, \mathsf{g})}.
	\end{equation}
	
	\p	Applying the Legendre duplication formula to the Gamma factors, we get
	\begin{equation}\label{Equation "Temp"}
		C(s) = \left(\frac{N(n_f)}{N(n_g)}\right)^{\frac{s}{2}} \left(\frac{N(n_{\mathsf{g}})}{N(n_{\mathsf{f}})}\right)^{s} 2^{\textbf{n}-\textbf{m}} \left(\prod_{i=1}^{r} \frac{\Gamma\left(\frac{2s + n_i + \frac{1}{2}}{2}\right)}{\Gamma\left(\frac{2s + m_i + \frac{1}{2}}{2}\right)}\right) \frac{\Lambda(2s, \mathsf{f})}{\Lambda(2s, \mathsf{g})}.
	\end{equation}
	From \eqref{Equation "Half-integral functional equation"} and \eqref{Equation "C(s) definition"}, we see that $C(s) = C'(1-s)$ where 
	\[
	C'(s):= \frac{\Lambda(s, f|_{W'(n_f)})}{\Lambda(s, g|_{W'(n_g)})}.
	\]
	Plugging this in, we get
	\begin{multline}
		\left(\frac{N(n_f)}{N(n_g)}\right)^{\frac{s}{2}} \left(\frac{N(n_{\mathsf{g}})}{N(n_{\mathsf{f}})}\right)^{s}\left(\prod_{i=1}^{r} \frac{\Gamma\left(\frac{2s + n_i + \frac{1}{2}}{2}\right)}{\Gamma\left(\frac{2s + m_i + \frac{1}{2}}{2}\right)}\right) \frac{\Lambda(2s, \mathsf{f})}{\Lambda(2s, \mathsf{g})} =\\
				\left(\frac{N(n_f)}{N(n_g)}\right)^{\frac{1-s}{2}} \left(\frac{N(n_{\mathsf{g}})}{N(n_{\mathsf{f}})}\right)^{1-s} \left(\prod_{i=1}^{r} \frac{\Gamma\left(\frac{2-2s + n_i + \frac{1}{2}}{2}\right)}{\Gamma\left(\frac{2-2s + m_i + \frac{1}{2}}{2}\right)}\right) \frac{\Lambda(2-2s, \mathsf{f})}{\Lambda(2-2s, \mathsf{g})}.
	\end{multline}
	
	\p	Here we use the fact that the Shimura lift of $f$ and $f|_{W'}$ coincide, similarly for $g$. Without loss of generality, we may suppose that for every $1\leqslant i,j\leqslant r$, $m_i\neq n_j$. Furthermore, we may suppose that $m_1$ is the minimum of $\{m_i, n_j\}_{i,j=1}^{r}$. Substituting $2s = m_1 + \frac{5}{2}$, we get
	\begin{multline}
		\left(\frac{N(n_f)}{N(n_g)}\right)^{\frac{2m_1 + 5}{8}} \left(\frac{N(n_{\mathsf{g}})}{N(n_{\mathsf{f}})}\right)^{\frac{2m_1 + 5}{4}}\left(\prod_{i=1}^{r} \frac{\Gamma\left(\frac{n_i + m_1 + 3}{2}\right)}{\Gamma\left(\frac{m_i+m_1+3}{2}\right)}\right) \frac{\Lambda\left(m_1+\frac{5}{2}, \mathsf{f}\right)}{\Lambda\left(m_1+\frac{5}{2}, \mathsf{g}\right)} =\\
		\left(\frac{N(n_f)}{N(n_g)}\right)^{\frac{-2m_1-1}{8}} \left(\frac{N(n_{\mathsf{g}})}{N(n_{\mathsf{f}})}\right)^{\frac{-2m_1-1}{4}}\left(\prod_{i=1}^{r} \frac{\Gamma\left(\frac{n_i - m_1}{2}\right)}{\Gamma\left(\frac{m_i - m_1}{2}\right)}\right) \frac{\Lambda\left(-\frac{1}{2}-m_1, \mathsf{f}\right)}{\Lambda\left(-\frac{1}{2}-m_1, \mathsf{g}\right)}.
	\end{multline}
	This leads to a contradiction as the right hand side vanishes but the left hand side does not. Thus the choice for $m_1$ is not possible, whence it follows that there exists a permutation $\rho$ of the indices $\{1,2,\ldots,r\}$ such that $l_i = k_{\rho(i)}$ for every $1\leqslant i\leqslant r$. Thus
	\begin{equation}\label{Equation "Temp 1"}
		C(s) = \left(\frac{N(n_f)}{N(n_g)}\right)^{\frac{s}{2}} \frac{L(s, f)}{L(s, g)} =  \left(\frac{N(n_f)}{N(n_g)}\right)^{\frac{s}{2}} \frac{L(2s, \mathsf{f})}{L(2s, \mathsf{g})}.
	\end{equation}
	As $L(s, \mathsf{f})$ and $L(s,\mathsf{g})$ are given by an absolutely converging Euler product in the region $\Re(s) > 1$, we conclude that $C(s)$ is holomorphic and non-vanishing in the region $\Re(s) > \frac{1}{2}$. Furthermore, $C(s)$ continues meromorphically to the whole complex plane.

	\p	Define
	\begin{equation}
		C_1(s):= C(s)\left(\frac{N(n_g)}{N(n_f)}\right)^{\frac{s}{2}} \left(\frac{N(n_{\mathsf{f}})}{N(n_{\mathsf{g}})}\right)^{s}.
	\end{equation}
	From the functional equations of $L(s, \mathsf{f})$ and $L(s, \mathsf{g})$, we see $C_1(s) = \varepsilon_{\mathsf{f}} \varepsilon_{\mathsf{g}}^{-1} C_1\left(\frac{1}{2}-s\right)$, where $\varepsilon_{\mathsf{f}}, \varepsilon_{\mathsf{g}}\in \{\pm 1\}$ are the eigenvalues of $\mathsf{f}, \mathsf{g}$ respectively under the respective Fricke involutions. Rewriting this in terms of $C(s)$, we get
	\begin{align*}
		\left(\frac{N(n_g)}{N(n_f)}\right)^{\frac{s}{2}} \left(\frac{N(n_\mathsf{f})}{N(n_\mathsf{g})}\right)^{s}C(s) &= \varepsilon_{\mathsf{f}} \varepsilon_{\mathsf{g}}^{-1} \left(\frac{N(n_g)}{N(n_f)}\right)^{\frac{1}{4}-\frac{s}{2}} \left(\frac{N(n_\mathsf{f})}{N(n_\mathsf{g})}\right)^{\frac{1}{2}-s} C\left(\frac{1}{2}-s\right)\\
		&= \varepsilon_{\mathsf{f}} \varepsilon_{\mathsf{g}}^{-1} \left(\frac{N(n_g)}{N(n_f)}\right)^{\frac{1}{4}-\frac{s}{2}} \left(\frac{N(n_\mathsf{f})}{N(n_\mathsf{g})}\right)^{\frac{1}{2}-s} C'\left(s+\frac{1}{2}\right).
	\end{align*}
	Applying the recursive relation once again for $C'(s)$ and using Lemma \ref{Lemma "Involution"}, we get that
	\begin{equation}\label{Equation "Periodicity"}
		\left(\frac{N(n_g)}{N(n_f)}\right)^{s + \frac{1}{4}} \left(\frac{N(n_{\mathsf{f}})}{N(n_{\mathsf{g}})}\right)^{2s + \frac{1}{2}} C(s) = \left(\frac{N(n_g)}{N(n_f)}\right)^{\frac{1}{4} - s} \left(\frac{N(n_{\mathsf{f}})}{N(n_{\mathsf{g}})}\right)^{\frac{1}{2}-2s} C(s+1).
	\end{equation}
	
	\p	Rewriting the above equation, we get
	\begin{equation}\label{Equation "Temp 2"}
		\left( \left(\frac{N(n_g)}{N(n_f)}\right)^{\frac{s}{2}} C(s)\right) =
		\left(\frac{N(n_g)}{N(n_f)}\right)^{-\frac{1}{2} - 2s} \left(\frac{N(n_{\mathsf{f}})}{N(n_{\mathsf{g}})}\right)^{-4s} \left(\left(\frac{N(n_g)}{N(n_f)}\right)^{\frac{s+1}{2}}C(s+1)\right).
	\end{equation}	
	From the Dirichlet series expansion for $L(s, \mathsf{f})$ (and our normalization), we see that $\lim_{\sigma\to\infty} L(\sigma + it, \mathsf{f})=1$ for any $t\in \R$. Choose $t=0$. From \eqref{Equation "Temp 1"}, choosing $s=\sigma\in \R$ and letting $\sigma\to\infty$ in \eqref{Equation "Temp 2"} leads to a contradiction unless $N(n_f) = N(n_g)$ and $N(n_\mathsf{f}) = N(n_\mathsf{g})$. In particular,  \eqref{Equation "Periodicity"} gives us $C(s) = C(s+1)$, and hence it follows that $C(s)$ is an entire function. Therefore it suffices to show that $C(s)$ is absolutely bounded. 
	
	\p	From periodicity, it is sufficient to do this for $\Re(s) \gg 1$ and we shall suppose so. Fix $t\in \R$. Arguing as above we may choose $\sigma$ large enough so that $|L(\sigma + it, \mathsf{f})|\leqslant 2$ and $|L(\sigma + it, \mathsf{g})| \geqslant \frac{1}{2}$ implying for large enough $\sigma$, $|C(\sigma+it)| \leqslant 4$. From periodicity, the above bound holds for every $\sigma\in \R$. Since $t$ was arbitrary, the above bound holds uniformly in $\C$.
	Thus $C(s)$ is a constant as claimed. From our normalization, we have shown that $L(s,f) = L(s,g)$.
	
	\p	To conclude that $f\equiv g$, we proceed as follows. Suppose that $\chi$ is a Hecke character with conductor away from the levels $n_f$ and $n_g$. We observe that the above proof follows almost verbatim, and we may deduce that $L(s, f\otimes \chi) = L(s, g\otimes \chi)$. Now we employ Lemma \ref{Lemma "Uniqueness of L functions"} to complete the proof.
	
	\p	\textbf{Restricting to the plus space:}	Consider forms $f$ of level $4n_f$ where $n_f$ is odd (coprime to $2$) and squarefree. The Kohnen-``plus'' space is a subspace of cusp forms characterized by certain conditions on the Fourier coefficients. For modular forms over $\Q$, Kohnen was able to show certain remarkable properties of this plus space in connection to the Shimura lift (see \cite{KohnenPlus},\cite{KohnenPlus2}). Suppose that $\xi\in \of$ is not a square in $F$. A tuple $(\xi, \eta)\in \of\times F$ is called a \textit{discriminant} if 
	\begin{enumerate}
		\item	there exists $\omega_\xi\in F(\sqrt{\xi})$ such that $Tr_{K_\xi/F}(\omega_\xi)^2 - 4 N_{K_\xi/F}(\omega_\xi) = \xi$, and
		\item 	$\of\oplus \omega_\xi\eta\of$ is an order in $K_\xi$.
	\end{enumerate}
	Furthermore we say that $(\xi,\eta)$ is a \textit{fundamental discriminant} if $\of\oplus \omega_\xi\eta \of$ is the ring of integers of $F(\sqrt{\xi})$.
	
	\p	A discriminant $(\xi,\eta)$ corresponds to the Fourier coefficient $\lambda_f(\xi\eta^{2})$ of $f$. In particular we can restrict ourselves to discriminants of the form $(\xi,1)$ for which $\xi\in F^+$. We may abbreviate the notation for fundamental discriminant $(\xi,1)$ as $\xi$.
	
	\p	Choose and fix a unit $v$ (if it exists) such that $N(v)=-1$. For any weight $\textbf{m}\in \Z^r$, we let $u_\textbf{m}$ denote either $1$ or $v$ depending on whether $(-1)^\textbf{m}=\pm1$ respectively. Now, we may extend \cite[Definition 13.2]{HiragaIkedaCompositio} for our case as follows. In the following, let us abbreviate $u_{\textbf{m}_f}$ as $u_f$; similarly for $g$.
	
	\begin{definition}[Kohnen plus space]\label{Definition "Kohnen plus space"}
		Given a weight $\textbf{k} = \textbf{m} + \frac{\textbf{1}}{\textbf{2}}$, let $u_\textbf{m}$ be as above. The Kohnen plus space is defined as the space of all half-integral weight modular forms of  weight $\textbf{k}$ such that $\lambda_f(\tau)=0$ unless $u_\textbf{m}\tau \equiv \square \mod 4$.
	\end{definition}
	
	\begin{corollary}\label{Corollary "Half"}
		Suppose $f$ and $g$ are two newforms of weights $\textbf{k}_f = \textbf{m}_f + \frac{\textbf{1}}{\textbf{2}}$ and $\textbf{k}_g = \textbf{m}_g + \frac{\textbf{1}}{\textbf{2}}$ respectively, in the Kohnen plus space (of respective weights and levels). Suppose that $\lambda_f(u_f\xi) = \lambda_g(u_g\xi)$ for every fundamental discriminant $\xi$. Then $f=g$.
	\end{corollary}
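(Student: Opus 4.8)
The plan is to deduce Corollary \ref{Corollary "Half"} from Theorem \ref{Theorem "Determination of half-integral weight forms modular forms"} by upgrading the hypothesis on fundamental discriminants to one on \emph{all} squarefree integers, the bridge being the support structure of the Kohnen plus space. The starting point is that Definition \ref{Definition "Kohnen plus space"} pins down the squarefree support: for squarefree $\tau\in\of^+$ one has $\lambda_f(\tau)=0$ unless $u_f\tau\equiv\square\pmod 4$, and likewise for $g$ with $u_g$. First I would record that the fundamental discriminants of the form $(\xi,1)$ are exactly the squarefree elements with $\xi\equiv\square\pmod 4$, so that multiplication by $u_f$ (whose existence with the prescribed signature is guaranteed by the narrow class number one hypothesis, which makes the signature map on units surjective) sets up a bijection $\xi\mapsto u_f\xi$ between fundamental discriminants and the squarefree elements in the support of $f$; the identical statement holds for $g$ with $u_g$. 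In this way the twist by $u_f$ absorbs the parity of the weight and identifies the fundamental-discriminant data $\{\lambda_f(u_f\xi)\}_\xi$ with the entire squarefree data $\{\lambda_f(\tau)\}_\tau$, since outside the support the latter vanish.

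The crux of the argument is to show that $u_f=u_g$, equivalently that $(-1)^{\textbf{m}_f}=(-1)^{\textbf{m}_g}$; this is where I expect the main obstacle to lie. If instead $u_f\neq u_g$, then $u_fu_g^{-1}$ is a non-square unit, so the two conditions $u_f\tau\equiv\square$ and $u_g\tau\equiv\square$ cannot hold simultaneously and the squarefree supports of $f$ and $g$ are disjoint; in that case the squarefree hypothesis needed for Theorem \ref{Theorem "Determination of half-integral weight forms modular forms"} is genuinely false and the corollary would fail unless this situation is excluded. To exclude it I would feed the identity \eqref{Equation "Identity for f"}---in which, thanks to the plus space, the sum over squarefree $\tau$ collapses to a sum over fundamental discriminants---into the functional equation machinery of Theorem \ref{Theorem "Functional Equation"} and \eqref{Equation "Functional equation Integral"}, exactly as in the proof of Theorem \ref{Theorem "Determination of half-integral weight forms modular forms"}. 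Since $f\neq 0$ its Shimura lift is nonzero, so some fundamental discriminant $\xi_0$ satisfies $\lambda_f(u_f\xi_0)\neq 0$, whence $\lambda_g(u_g\xi_0)\neq 0$; comparing the completed $L$-functions then forces the weight vectors $\textbf{m}_f$ and $\textbf{m}_g$ to agree up to a permutation of the embeddings and $N(n_f)=N(n_g)$, giving in particular $(-1)^{\textbf{m}_f}=(-1)^{\textbf{m}_g}$ and thus $u_f=u_g$. The delicate point is that the character $\chi_{u_f\xi}$ attached to $u_f\xi$ differs from $\chi_\xi$ when $u_f$ is not a square, so the bookkeeping that shows the incompatible symmetries of the two functional equations cancel only against the fundamental-discriminant factors requires care; this is precisely the mechanism already exploited in the main theorem, now carried out over the restricted index set.

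With $u_f=u_g=:u$ in hand, the conclusion is immediate. The hypothesis now reads $\lambda_f(u\xi)=\lambda_g(u\xi)$ for every fundamental discriminant $\xi$, and through the common bijection $\xi\mapsto u\xi$ this is exactly $\lambda_f(\tau)=\lambda_g(\tau)$ for every squarefree $\tau$ in the (now common) support, while for squarefree $\tau$ outside the support both sides vanish by the plus space condition. Hence $\lambda_f(\tau)=\lambda_g(\tau)$ for every squarefree $\tau\in\of$, and an application of Theorem \ref{Theorem "Determination of half-integral weight forms modular forms"} with $\kappa=1$ yields $f=g$.
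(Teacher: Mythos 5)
Your handling of the main case $u_f=u_g$ coincides with the paper's proof: for squarefree $\tau$ outside the plus-space support both sides vanish by Definition \ref{Definition "Kohnen plus space"}, while a squarefree $\xi\equiv\square\pmod 4$ is automatically coprime to $2$ and is therefore a fundamental discriminant, so the hypothesis covers the whole squarefree support and Theorem \ref{Theorem "Determination of half-integral weight forms modular forms"} finishes. One small point: you assert the identification of fundamental discriminants with squarefree elements $\equiv\square\pmod 4$ as a "record", whereas the paper actually has to justify it, invoking Cohen's theorem on relative discriminants (Theorem 1.7 of the reference cited there) to see that such $\xi$ generate the relative discriminant of $F(\sqrt{\xi})/F$.

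The genuine gap is in what you yourself call the crux: the exclusion of $u_f\neq u_g$. You propose to run the functional-equation machinery of Theorem \ref{Theorem "Functional Equation"} over the fundamental-discriminant sums "exactly as in the proof" of the main theorem, to force $(-1)^{\textbf{m}_f}=(-1)^{\textbf{m}_g}$. But that machinery cannot be started here. The entire proof of Theorem \ref{Theorem "Determination of half-integral weight forms modular forms"} rests on the fact that, under its hypothesis, the squarefree Dirichlet-series factors in \eqref{Equation "Identity for f"} and \eqref{Equation "Identity for g"} are \emph{literally identical} --- same indices $\tau$, same coefficients, and the same weights $L\left(2s+\frac{1}{2},\chi_\tau\right)^{-1}$ --- so that they cancel in the ratio $C(s)=\Lambda(s,f)/\Lambda(s,g)$ of \eqref{Equation "C(s) definition"}, leaving only Gamma factors, conductors and the Shimura lifts, on which the incompatible symmetries act. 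When $u_f\neq u_g$ your hypothesis equates $\lambda_f(u_f\xi)$ with $\lambda_g(u_g\xi)$, coefficients at \emph{different} indices, and the attached characters $\chi_{u_f\xi}$ and $\chi_{u_g\xi}$ belong to different quadratic extensions $F(\sqrt{u_f\xi})\neq F(\sqrt{u_g\xi})$; consequently the two squarefree sums do not cancel in $C(s)$, no clean ratio of completed $L$-functions survives, and no contradiction is produced. You flag exactly this ("the delicate point is that the character $\chi_{u_f\xi}$ differs from $\chi_\xi$") but then dismiss it as "precisely the mechanism already exploited in the main theorem"; it is not --- the main theorem never has to compare sums with mismatched twists, and repairing this would need genuinely new input, not bookkeeping.

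Moreover, the analytic detour is unnecessary, and this is where the paper's proof differs from yours: the case $u_f\neq u_g$ is dispatched by the elementary observation you already made. If $u_fu_g^{-1}$ is a non-square unit, then $u_f\xi$ and $u_g\xi$ can never simultaneously be admissible (squares modulo $4$ in the respective supports), so the hypothesis cannot link two nonzero coefficients; since a nonzero Hecke eigenform has a nonvanishing Shimura lift and hence some nonvanishing fundamental-discriminant coefficient, the comparison degenerates, and one may assume $u_f=u_g$ outright. That one-line degeneracy argument --- not a functional-equation argument --- is the paper's reduction, after which your (and its) final step applies.
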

	
	\begin{proof}
		If $1=u_f\neq u_g$, then $u_f\xi$ and $u_g\xi$ cannot both simultaneously be squares modulo $4$. This leads to a degenrate case. Hence we may suppose $u_f = u_g= u$.  Suppose $\xi\in u\of^+$ is squarefree. From Theorem \ref{Theorem "Determination of half-integral weight forms modular forms"}, it is enough to verify that
		\begin{equation}\label{Equation "Verify"}
			\lambda_f(u\xi) = \lambda_g(u\xi).
		\end{equation}
		We may suppose that $\xi \equiv \square \mod 4$; otherwise both sides of \eqref{Equation "Verify"} vanish. In particular, this means that $\xi$ is coprime to $2$. Then from \cite[Theorem 1.7]{CohenQuartic}, the relative discriminant of $F(\sqrt{\xi})$ over $F$ equals $\xi$. In this case \eqref{Equation "Verify"} follows from our assumption completing the proof.		
	\end{proof}
		
	\section{Proof of Theorem \ref{Theorem "Non-vanishing of central values"}}\label{Section "Non-vanishing of central values"}
	
	\p	In order not to clog up notation, in this section, $\tau\in \of^+/ U^+$ shall denote a squarefree algebraic integer. For Hecke eigenforms, choosing $M(\xi) = N(\xi)^{-s}$ in \eqref{Equation "Identity"} and using M\"obius inversion we get
	\[
	\lambda_f(\tau\xi^2) = \lambda_f(\tau) \sum_{\eta_1\eta_2 = \xi\mod U}\mu_F(\eta_1) \frac{\chi_\tau(\eta_1)}{N(\eta_1)} \lambda_\mathsf{f}(\eta_2),
	\]
	where $\mu_F$ denotes the analogue of the M\"obius function for $F$. For $T\gg 1$ we have
	\[
	\underset{N(\tau\xi^2)\leqslant T}{\sum_{\tau\xi^2\in \of^+/U^+}} \left|\lambda_f(\tau\xi^2)\right|^2 = \underset{ N(\tau\xi^2)\leqslant T}{\sum_{\tau\xi^2\in \of^+/U^+}} \left|\lambda_f(\tau)\right|^2 \left|\underset{\eta_1, \eta_2\mod U}{\sum_{\eta_1\eta_2 = \xi}} \mu_F(\eta_1) \frac{\chi_\tau(\eta_1)}{N(\eta_1)} \lambda_\mathsf{f}(\eta_2)\right|^2.
	\]
	
	\p	Using the Rankin-Selberg estimate \eqref{Equation "Rankin-Selberg estimate"} on the left hand side and the Ramanujan bound (which is known for $\mathsf{f}$) on the right hand side we obtain, for some $0 < \epsilon < 1/2$ and $A\gg 1$,
	\begin{align*}
		T &\ll_{f,\epsilon} \underset{N(\tau\xi^2)\leqslant T}{\sum_{\tau\xi^2\in \of^+/U^+}} \left|\lambda_f(\tau)\right|^2 \xi^\epsilon = \underset{N(\tau)\leqslant T}{\sum_{\tau\in \of^+/U^+}} \left|\lambda_f(\tau)\right|^2 \sum_{N(\xi)\leqslant \sqrt{\frac{T}{N(\tau)}}}\xi^\epsilon\\
		&\ll T^{\frac{1}{2}+\epsilon} \underset{\log(T)\leqslant N(\tau)\leqslant T}{\sum_{\tau\in \of^+/U^+}} \frac{\left|\lambda_f(\tau)\right|^2}{N(\tau)^{\frac{1}{2}+\epsilon}} + \mathcal{O}\left(T^{\frac{1}{2}+\epsilon}\log^A(T)\right)\\
		&\ll_F T \sup_{\log(T) \leqslant N(\tau) \leqslant T} \left\{|\lambda_f(\tau)|^2\right\}  + \mathcal{O}\left(T^{\frac{1}{2}+\epsilon}\log^A(T)\right).
	\end{align*}
	Cancelling $T$ from both sides completes the proof.
	
	\bibliographystyle{amsalpha}
	
	\bibliography{Bibliography}

\providecommand{\bysame}{\leavevmode\hbox to3em{\hrulefill}\thinspace}
\providecommand{\MR}{\relax\ifhmode\unskip\space\fi MR }
\providecommand{\MRhref}[2]{%
  \href{http://www.ams.org/mathscinet-getitem?mr=#1}{#2}
}
\providecommand{\href}[2]{#2}
\begin{thebibliography}{CDyDO05}

\bibitem[AS08]{SahaJNT}
Sever Achimescu and Abhishek Saha, \emph{Hilbert modular forms of weight 1/2
  and theta functions}, J. Number Theory \textbf{128} (2008), no.~12,
  3037--3062. \MR{2464853}

\bibitem[CDyDO05]{CohenQuartic}
Henri Cohen, Francisco Diaz~y Diaz, and Michel Olivier, \emph{Counting cyclic
  quartic extensions of a number field}, J. Th\'eor. Nombres Bordeaux
  \textbf{17} (2005), no.~2, 475--510. \MR{2211303}

\bibitem[Das20]{DasOmega}
Soumya Das, \emph{Omega results for {F}ourier coefficients of half-integral
  weight and {S}iegel modular forms}, Modular forms and related topics in
  number theory, Springer Proc. Math. Stat., vol. 340, Springer, Singapore,
  [2020] \copyright2020, pp.~59--72. \MR{4179368}

\bibitem[Gar90]{GarrettBook}
Paul~B. Garrett, \emph{Holomorphic {H}ilbert modular forms}, The Wadsworth \&
  Brooks/Cole Mathematics Series, Wadsworth \& Brooks/Cole Advanced Books \&
  Software, Pacific Grove, CA, 1990. \MR{1008244}

\bibitem[GHS09]{GangulyHoffsteinSengupta}
Satadal Ganguly, Jeffrey Hoffstein, and Jyoti Sengupta, \emph{Determining
  modular forms on {${\rm SL}_2(\Bbb Z)$} by central values of convolution
  {$L$}-functions}, Math. Ann. \textbf{345} (2009), no.~4, 843--857.
  \MR{2545869}

\bibitem[GK19]{GunKohnenOmega}
Sanoli Gun and Winfried Kohnen, \emph{On the {R}amanujan-{P}etersson conjecture
  for modular forms of half-integral weight}, Forum Math. \textbf{31} (2019),
  no.~3, 703--711. \MR{3943334}

\bibitem[GKS24]{GunKohnenSound}
S.~Gun, W.~Kohnen, and K.~Soundararajan, \emph{Large {F}ourier coefficients of
  half-integer weight modular forms}, Amer. J. Math. \textbf{146} (2024),
  no.~4, 1169--1191. \MR{4775033}

\bibitem[GMP21]{GMP}
Sanoli Gun, V.~Kumar Murty, and Biplab Paul, \emph{Distinguishing newforms by
  their {H}ecke eigenvalues}, Res. Number Theory \textbf{7} (2021), no.~3,
  Paper No. 49, 24. \MR{4292844}

\bibitem[HI13]{HiragaIkedaCompositio}
Kaoru Hiraga and Tamotsu Ikeda, \emph{On the {K}ohnen plus space for {H}ilbert
  modular forms of half-integral weight {I}}, Compos. Math. \textbf{149}
  (2013), no.~12, 1963--2010. \MR{3143703}

\bibitem[HT17]{HamiehTanabe}
Alia Hamieh and Naomi Tanabe, \emph{Determining {H}ilbert modular forms by
  central values of {R}ankin-{S}elberg convolutions: the level aspect}, Trans.
  Amer. Math. Soc. \textbf{369} (2017), no.~12, 8781--8797. \MR{3710644}

\bibitem[Koh80]{KohnenPlus}
Winfried Kohnen, \emph{Modular forms of half-integral weight on {$\Gamma
  \sb{0}(4)$}}, Math. Ann. \textbf{248} (1980), no.~3, 249--266. \MR{575942}

\bibitem[Koh82]{KohnenPlus2}
\bysame, \emph{Newforms of half-integral weight}, J. Reine Angew. Math.
  \textbf{333} (1982), 32--72. \MR{660784}

\bibitem[Koh92]{KohnenDetermination}
\bysame, \emph{On {H}ecke eigenforms of half-integral weight}, Math. Ann.
  \textbf{293} (1992), no.~3, 427--431. \MR{1170517}

\bibitem[LR97]{LuoRamakrishnan}
Wenzhi Luo and Dinakar Ramakrishnan, \emph{Determination of modular forms by
  twists of critical {$L$}-values}, Invent. Math. \textbf{130} (1997), no.~2,
  371--398. \MR{1474162}

\bibitem[MDG15]{RamMurtyBook}
M.~Ram Murty, Michael Dewar, and Hester Graves, \emph{Problems in the theory of
  modular forms}, Institute of Mathematical Sciences Lecture Notes, vol.~1,
  Hindustan Book Agency, New Delhi, 2015. \MR{3330491}

\bibitem[MS15a]{MunshiSengupta}
Ritabrata Munshi and Jyoti Sengupta, \emph{Determination of {$GL(3)$}
  {H}ecke-{M}aass forms from twisted central values}, J. Number Theory
  \textbf{148} (2015), 272--287. \MR{3283180}

\bibitem[MS15b]{MunshiSenguptaForum}
\bysame, \emph{On effective determination of {M}aass forms from central values
  of {R}ankin-{S}elberg {$L$}-function}, Forum Math. \textbf{27} (2015), no.~1,
  467--484. \MR{3334069}

\bibitem[Mun10]{MunshiMathAnnalen}
Ritabrata Munshi, \emph{On effective determination of modular forms by twists
  of critical {$L$}-values}, Math. Ann. \textbf{347} (2010), no.~4, 963--978.
  \MR{2658151}

\bibitem[Pi14]{PiJNT}
Qinghua Pi, \emph{On effective determination of cusp forms by {$L$}-values,
  level aspect}, J. Number Theory \textbf{142} (2014), 305--321. \MR{3208405}

\bibitem[Shi78]{ShimuraDuke78}
Goro Shimura, \emph{The special values of the zeta functions associated with
  {H}ilbert modular forms}, Duke Math. J. \textbf{45} (1978), no.~3, 637--679.
  \MR{507462}

\bibitem[Shi87]{ShimuraDuke87}
\bysame, \emph{On {Hilbert} modular forms of half-integral weight}, Duke Math.
  J. \textbf{55} (1987), 765--838 (English).

\bibitem[Shi93]{ShimuraDuke93}
\bysame, \emph{On the {Fourier} coefficients of {Hilbert} modular forms of
  half-integral weight}, Duke Math. J. \textbf{71} (1993), no.~2, 501--557
  (English).

\end{thebibliography}

\end{document}